\numberwithin{equation}{section}
\numberwithin{equation}{section}
\DeclareMathOperator{\ord}{ord}
\newtheorem{thm}{Theorem}[section]
\newtheorem{lem}{Lemma}[section]
\newtheorem{conj}{Conjecture}[section]
\newtheorem{dfn}{Definition}[section]
\newcommand{\N}{\mathbb{N}}
\newcommand{\Z}{\mathbb{Z}}
\begin{document}

\title{A Result for Germain Primes}

%    Remove any unused author tags.

%    author one information
\author{N. A. Carella}
\address{}
\curraddr{}
\email{}
\thanks{}

%    author two information
% \author{}
% \address{}
% \curraddr{}
% \email{}
% \thanks{}

\subjclass[2010]{Primary 11N05, Secondary 11N32, 11P32. }

\keywords{Distribution of Primes; Germain Primes Conjecture; dePolignac Conjecture.}

\date{}

\dedicatory{}

\begin{abstract}This article determines a lower bound for the number Germain prime pairs $p$ and $2p+1$ up to a large number $x$.
\end{abstract}

\maketitle
\tableofcontents
\pagenumbering{Page gobble}
\pagenumbering{arabic}
%SSSSSSSSSSSSSSSSSSSSSSSSSSSSSSSSSSSSSSSSSSSSSSSS
%SSSSSSSSSSSSSSSSSSSSSSSSSSSSSSSSSSSSSSSSSSSSSSSS
%SSSSSSSSSSSSSSSSSSSSSSSSSSSSSSSSSSSSSSSSSSSSSSSS
%SSSSSSSSSSSSSSSSSSSSSSSSSSSSSSSSSSSSSSSSSSSSSSSS
\section{\textbf{Introduction and the Main Result}}\label{S3030}
The distribution of the sequence of Germain primes $(2,5), (3,7), (5,11)\ldots, (p,2p+1), \ldots, $ and the distributions of other sequences of linearly dependent pairs $p$ and $p<q$ such that $ap+bq=c$, and prime $k$-tuples are long standing topics of research in number theory. Discussions on the prime pairs problems appear in \cite{RP1996}, \cite{NW2000}, and many other references in the vast literature on this subject.\\ 

The predicted asymptotic form for the Germain primes problem is given below.
\begin{conj} \label{conj3030.900}{\normalfont (\cite[Conjecture D, p. 45]{HL1923})} There are infinitely many Germain prime pairs. If $\pi_{\mathcal{G}}(x)$ is the number of pairs less than $x$, then 
	$$\pi_{\mathcal{G}}(x)=2C_2\int_2^x\frac{1}{\log (t)\log(2t+1)}dt+O\left( \frac{x}{(\log x)^3}\right), $$ 
	where $C_2>0$ is a constant defined by
\begin{equation}\label{eq3030.900}
C_2=\prod_{p\geq3}\left(1-\frac{1}{(p-1)^2} \right)=0.6601618605898407646766938915352060
	\ldots. 
\end{equation}
\end{conj}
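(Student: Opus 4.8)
The plan is to replace the counting function by the weighted correlation sum
$$\psi_{\mathcal{G}}(x)=\sum_{n\leq x}\Lambda(n)\,\Lambda(2n+1),$$
where $\Lambda$ is the von Mangoldt function, and to prove
$$\psi_{\mathcal{G}}(x)=2C_2\,x+O\!\left(\frac{x}{\log x}\right).$$
Since the contribution of proper prime powers to $\psi_{\mathcal{G}}$ is $O(\sqrt{x}\log x)$, partial summation then transfers this into the claimed main term $2C_2\int_2^x\frac{dt}{\log t\,\log(2t+1)}$ with error $O\!\left(x/(\log x)^3\right)$. To evaluate $\psi_{\mathcal{G}}(x)$ I would use the Hardy--Littlewood circle method. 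Writing $e(\theta)=e^{2\pi i\theta}$ and detecting the constraint $m=2n+1$ through the orthogonality relation $\int_0^1 e\big(\alpha(m-2n-1)\big)\,d\alpha=\mathbf{1}[m=2n+1]$, one obtains the exact identity
$$\psi_{\mathcal{G}}(x)=\int_0^1 S_1(\alpha)\,S_2(-2\alpha)\,e(-\alpha)\,d\alpha,\qquad S_1(\alpha)=\sum_{m\leq 2x+1}\Lambda(m)e(m\alpha),\quad S_2(\alpha)=\sum_{n\leq x}\Lambda(n)e(n\alpha).$$
The unit interval is then dissected into major arcs $\mathfrak{M}$ around rationals $a/q$ with $q\leq(\log x)^A$ and complementary minor arcs $\mathfrak{m}$.

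On the major arcs, write $\alpha=a/q+\beta$ with $\beta$ small and approximate each Weyl sum by the Siegel--Walfisz theorem, $S_j(a/q+\beta)\approx \frac{\mu(q)}{\phi(q)}\int_0^{L_j} e(\beta t)\,dt$ with $L_1=2x+1$ and $L_2=x$. Substituting, integrating over $\beta$, and summing the arithmetic factors over all admissible $a/q$ yields the main term $x$ times the singular series
$$\mathfrak{S}=\prod_p \frac{1-\nu(p)/p}{(1-1/p)^2},\qquad \nu(p)=\#\{n \bmod p:\ n(2n+1)\equiv 0 \bmod p\}.$$
A direct local count gives $\nu(2)=1$, so the factor at $2$ equals $2$, and $\nu(p)=2$ for $p>2$, so the factor at $p$ equals $\frac{(p-2)p}{(p-1)^2}=1-\frac{1}{(p-1)^2}$. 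Hence $\mathfrak{S}=2\prod_{p>2}\big(1-(p-1)^{-2}\big)=2C_2$, matching \eqref{eq3030.900} (the Jacobian $\tfrac12$ from the scale $-2\alpha$ cancels against the doubled overlap length in the singular integral, leaving exactly $x$). The Siegel--Walfisz error on $\mathfrak{M}$ is $O\!\big(x/(\log x)^C\big)$ for every fixed $C$, comfortably inside the target.

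The decisive step, and the one I expect to be the main obstacle, is showing that the minor arcs contribute only
$$\int_{\mathfrak{m}}S_1(\alpha)\,S_2(-2\alpha)\,e(-\alpha)\,d\alpha=O\!\left(\frac{x}{\log x}\right).$$
On $\mathfrak{m}$ the Vinogradov--Vaughan method gives the pointwise bound $|S_j(\alpha)|\ll\big(x/\sqrt q+x^{4/5}+\sqrt{xq}\big)(\log x)^{4}$, so for $q\geq(\log x)^A$ one has $\sup_{\mathfrak{m}}|S_j|\ll x(\log x)^{4-A/2}$, a genuine power-of-log saving over the trivial size $x$. In a \emph{ternary} problem this saving, applied to one of three Weyl factors and combined with the mean square $\int_0^1|S_j(\alpha)|^2\,d\alpha\ll x\log x$ on the remaining two, closes the argument. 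The present correlation is \emph{binary}: Cauchy--Schwarz yields only
$$\int_{\mathfrak{m}}\big|S_1(\alpha)S_2(-2\alpha)\big|\,d\alpha\leq\Big(\int_{\mathfrak{m}}|S_1(\alpha)|^2\,d\alpha\Big)^{1/2}\Big(\int_{\mathfrak{m}}|S_2(-2\alpha)|^2\,d\alpha\Big)^{1/2}\ll x\log x,$$
and both factors are already consumed by the two mean squares, leaving no third Weyl sum into which to insert the pointwise saving. The resulting bound $O(x\log x)$ exceeds the main term $2C_2 x$, so the circle method in this naive form does not close.

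Consequently the plan hinges on a new minor-arc input that extracts cancellation from the coupling of the two scales $\alpha$ and $2\alpha$ --- for instance a bilinear or dispersion estimate for the joint distribution of $\Lambda(n)$ and $\Lambda(2n+1)$ in arithmetic progressions that beats the mean-square mass $x\log x$ by a power of $\log x$, rather than relying on the pointwise Weyl bounds alone. Exhibiting such cancellation in $\sum_{n\leq x}\Lambda(n)\Lambda(2n+1)$ is exactly what lies beyond the reach of the large sieve and of the current type I/II bilinear decompositions, and it is precisely this binary barrier that also keeps the companion twin-prime and de Polignac problems open. Once a minor-arc bound of size $O(x/\log x)$ is established, combining it with the major-arc evaluation gives $\psi_{\mathcal{G}}(x)=2C_2 x+O(x/\log x)$, and partial summation delivers the asymptotic of Conjecture \ref{conj3030.900} with the stated error term $O\!\big(x/(\log x)^3\big)$.
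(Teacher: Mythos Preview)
The statement you are attempting to prove is a \emph{conjecture}, not a theorem, and the paper does not prove it. It is stated as Conjecture~\ref{conj3030.900} (attributed to Hardy--Littlewood) precisely because it remains open; the paper's actual result, Theorem~\ref{thm3030.900}, is the much weaker lower bound $\sum_{n\leq x}\Lambda(n)\Lambda(2n+1)\gg x\log\log x/((\log x)\log\log\log x)$, obtained by an entirely different route (a weighted correlation $\sum_n\Lambda(n)\Lambda(2n+1)^2$ combined with a divisor-switching identity and estimates for $\sum_{m,n}\mu(m)\mu(n)\log m\log n/\varphi([m,n])$). There is therefore no proof in the paper to compare your proposal against.

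Your own analysis is honest and correct about where the argument breaks: the circle method for this binary problem stalls on the minor arcs, because Cauchy--Schwarz against the two mean squares gives only $O(x\log x)$, larger than the main term, and there is no third Weyl sum to absorb the Vinogradov pointwise saving. You explicitly concede that closing the gap requires a ``new minor-arc input'' of a type that ``lies beyond the reach of the large sieve and of the current type I/II bilinear decompositions.'' That is exactly the well-known parity/binary obstruction, and it is why the conjecture is still open. So your proposal is not a proof but a correct diagnosis of why no proof currently exists; as such it cannot be accepted as establishing the statement.
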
 

More generally, Conjecture D handles linearly dependent prime pairs $ap+bq=c$, where $a,b,c\in \Z^{\times}$ are constants. Some numerical data to back the Germain primes conjecture are available in the literature, see \cite[Section 3.5]{CC2021}, and \cite[p.\ 4]{WS2021}.\\ 

Let $\Lambda(n)$ denotes the weighted prime power indicator function, (von Mangoldt function),  
\begin{equation}\label{eq3030.050}
	\Lambda(n)=
	\begin{cases}
		\log n &\text{ if } n=p^k,\\
		0&\text{ if } n\ne p^k.\\
	\end{cases}
\end{equation}
The conjecture is equivalent to the weighted sum \begin{equation}\label{eq3030.060}
	\sum_{ n \leq x} \Lambda(n)\Lambda(2n+1)=2C_2x+o(x).
\end{equation} This note proposes a weaker asymptotic formula.
\begin{thm}\label{thm3030.900}
	If $x\geq 1$ is a large real number, then
	\begin{equation}
		\sum_{ n\leq x}\Lambda(n)\Lambda(2n+1) \gg \frac{x\log \log x}{(\log x)(\log \log \log x)}\nonumber.
	\end{equation}
\end{thm}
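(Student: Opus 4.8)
The plan is to treat $S(x)=\sum_{n\le x}\Lambda(n)\Lambda(2n+1)$ by the Hardy--Littlewood circle method. Writing $e(\theta)=e^{2\pi i\theta}$ and detecting the relation $m=2n+1$ through the orthogonality $\int_0^1 e(k\alpha)\,d\alpha$, which equals $1$ if $k=0$ and $0$ otherwise, one has
$$S(x)=\int_0^1\Big(\sum_{m\le 2x+1}\Lambda(m)e(m\alpha)\Big)\Big(\sum_{n\le x}\Lambda(n)e(-2n\alpha)\Big)e(-\alpha)\,d\alpha .$$
Dissect $[0,1]$ into major arcs $\mathfrak M$ around fractions $a/q$ with $q\le Q=(\log x)^{C}$ and minor arcs $\mathfrak m$. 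On $\mathfrak M$ I would replace each von Mangoldt exponential sum by its Siegel--Walfisz approximation; assembling the resulting main terms produces the singular series $\prod_p(1-\nu(p)/p)(1-1/p)^{-2}$, where $\nu(p)=\#\{n\bmod p:n(2n+1)\equiv 0\}$ equals $1$ for $p=2$ and $2$ for $p\ge 3$, so the product collapses to $2C_2$ with $C_2$ as in \eqref{eq3030.900}. Thus $\int_{\mathfrak M}=2C_2x+o(x)$, matching the target of Conjecture~\ref{conj3030.900} and of \eqref{eq3030.060}. (Equivalently one may open $\Lambda(2n+1)$ by M\"obius inversion and study $\Lambda$ in arithmetic progressions to moduli up to $2x$; the same split into a tractable range and a hard range reappears, now as the dichotomy between the Bombieri--Vinogradov range and the moduli beyond it.)

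Everything then comes down to the minor arcs, and this is the genuine obstacle. The elementary bound $\int_{\mathfrak m}\ll\big(\sup_{\mathfrak m}\big|\sum_m\Lambda(m)e(m\alpha)\big|\big)\big\|\sum_n\Lambda(n)e(-2n\alpha)\big\|_{1}$ --- decisive in ternary additive problems --- is far too weak here: Vinogradov's estimate controls the supremum only by $x(\log x)^{O(1)}Q^{-1/2}$, so this route bounds $\int_{\mathfrak m}$ merely by $x^{3/2}(\log x)^{O(1)}Q^{-1/2}$, which is much larger than the main term $2C_2 x$. This is the binary/parity barrier, exactly what keeps the asymptotic in Conjecture~\ref{conj3030.900} out of reach, and it forbids discarding $\mathfrak m$ outright. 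To salvage even a slowly growing lower bound I would instead decompose $\Lambda$ via a Vaughan or Heath--Brown identity, resolve the resulting bilinear sums over $\mathfrak m$ into Type~I and Type~II ranges, and use the large sieve together with the Bombieri--Vinogradov estimate to bound the accessible part of $\int_{\mathfrak m}$, choosing the parameters so that the part left uncontrolled falls short of $2C_2 x$ only by a slowly decaying factor built from iterated logarithms. Optimising the arc-length $Q$ and the Type~I/Type~II cut-off against this loss, and disposing of the prime powers $n=p^k$ and $2n+1=p^k$ with $k\ge 2$ trivially (their combined $\Lambda$-weight is $O(x^{1/2}(\log x)^2)$), should then deliver a lower bound of the shape $\gg x\log\log x/((\log x)\log\log\log x)$.

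I expect this minor-arc analysis to be the real work, and the place where any genuinely new idea must enter. It is the same wall that a direct sieve meets: a weighted linear sieve for $\#\{p\le x:2p+1\text{ prime}\}$ returns only the trivial lower bound $0$ once the sifting level reaches $(2x)^{1/2}$ (the parity phenomenon again), so an input beyond the sieve axioms --- of the bilinear or exponential-sum type sketched above --- is unavoidable. Extracting from it a positive, if strongly sub-conjectural, main term is precisely the content of Theorem~\ref{thm3030.900}, and the iterated-logarithm factors in the final bound would be the numerical trace of how much of the minor-arc contribution one is forced to concede.
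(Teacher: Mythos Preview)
Your proposal is not a proof but a programme whose decisive step you explicitly leave open. The major-arc analysis is fine and yields $2C_2x$, but your treatment of the minor arcs consists only of a list of tools (Vaughan/Heath--Brown identities, Type~I/II sums, large sieve, Bombieri--Vinogradov) together with the hope that ``optimising the arc-length $Q$ and the Type~I/Type~II cut-off'' will leave an uncontrolled piece smaller than the main term by an iterated-logarithm factor. You do not specify the cut-offs, do not estimate any bilinear sum, and do not exhibit any mechanism by which the uncontrolled residue is bounded above by $cx$ with $c<2C_2$; indeed you yourself identify this as the parity barrier and say it is ``the place where any genuinely new idea must enter'' --- and then supply none. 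As written, the proposal does not produce a lower bound of any strength for $\sum_{n\le x}\Lambda(n)\Lambda(2n+1)$; the iterated logarithms in the target are asserted, not derived.

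The paper does not use the circle method at all. It passes to the squared-weight sum $\psi_0(x)=\sum_{n\le x}\Lambda(n)\Lambda^2(2n+1)$ (Theorem~\ref{thm2020.700}) and recovers Theorem~\ref{thm3030.900} from it by partial summation, absorbing the extra factor $\Lambda(2n+1)\le\log(2x+1)$. Both copies of $\Lambda(2n+1)$ are opened via $\Lambda(m)=-\sum_{d\mid m}\mu(d)\log d$, giving a double sum over $d_1,d_2\le 2x+1$ which is truncated at $x_1=(\log x)^{c_0}$. For $d_1,d_2\le x_1$ the inner sum $\sum_n\Lambda(n)$ over $n\equiv a\bmod[d_1,d_2]$ is evaluated by Siegel--Walfisz, and the main term becomes the nonnegative quadratic form $\sum_{d_1,d_2\le x_1}\mu(d_1)\mu(d_2)\log d_1\log d_2/\varphi([d_1,d_2])$, bounded below in Lemma~\ref{lem4040.250P}; this is the source of the $\log\log x/\log\log\log x$ factor. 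The tail with $d_1>x_1$ or $d_2>x_1$ is handled by opening the remaining $\Lambda(n)$ as well and invoking the equidistribution estimate of Lemma~\ref{lemA2002.400W}. There is no minor-arc dissection and no exponential-sum input; the squared weight is precisely what the paper uses to make the main-term form positive, a device absent from your sketch.
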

This is not to far from the conjecture asymptotic in \eqref{eq3030.060}, and the unconditional upper bound
\begin{equation}\label{eq3030.070}
	\sum_{ n\leq x}\Lambda(n)\Lambda(2n+1) \leq cx ,
\end{equation}
where $c>0$ is an explicit constant, see \cite[Corollary 1.2]{BS2022}.\\

A short outline of this article is provided here. Theorem \ref{thm3030.900} is a simple corollary of Theorem \ref{thm2020.700} in Section \ref{S2020}. The basic materials required to prove the fundamental result in Theorem \ref{thm2020.700} are developed and proved in Section \ref{S4040} to Section \ref{S2059}.  Section \ref{S4040} deals with several forms of the finite sum $\sum_{m,n\leq x}f(m,n)/[m,n]$, which are of independent interest in number theory. An asymptotic formula for the uniform distribution of integers in arithmetic progressions is proved in Section \ref{A2002}. The proof Theorem \ref{thm3030.900} of appears in Section \ref{S3535}.

%SSSSSSSSSSSSSSSSSSSSSSSSSSSSSSSSSSSSSSSSSSSSSSSS
%SSSSSSSSSSSSSSSSSSSSSSSSSSSSSSSSSSSSSSSSSSSSSSSS
%SSSSSSSSSSSSSSSSSSSSSSSSSSSSSSSSSSSSSSSSSSSSSSSS
\section{\textbf{Foundational Results}}\label{S4040}
The expressions $(m,n)=\text{gcd}(m,n)$ and $[m,n]=\text{lcm}(m,n)$ denote the greatest common divisor and the lowest common multiple respectively. The totient function is defined by 
\begin{equation}\label{eq4040.020}
	\varphi(n)=n\prod_{p\mid n}\left( 1-\frac{1}{p}\right) ,
\end{equation} and the Mobius function is defined by
\begin{equation}\label{eq4040.030}
	\mu(n)=
	\begin{cases}
		(-1)^w &\text{ if } n=p_1p_2\cdots p_w,\\
		0&\text{ if } n\ne p_1p_2\cdots p_w.
	\end{cases}
\end{equation}

The nonnegativity of the finite sum
\begin{equation}\label{eq4040.040}
	\sum_{m,n \leq x} \frac{\mu(m)\mu(n)}{[m,n]}	>0
\end{equation}
and the convergence of the associated series as $x\to\infty$ is the subject a study in \cite{DI1983}, and in sieve theory. Similar techniques are used here to derive several estimates and verify the nonnegativity of some related finite sums. These finite sums arise in the analysis of the main term and error term of Theorem \ref{thm2020.700}.

%SSSSSSSSSSSSSSSSSSSSSSSSSSSSSSSSSSSSSSSSSSSSSSSS
%SSSSSSSSSSSSSSSSSSSSSSSSSSSSSSSSSSSSSSSSSSSSSSSS
%SSSSSSSSSSSSSSSSSSSSSSSSSSSSSSSSSSSSSSSSSSSSSSSS
\subsection{Elementary Identities}

\begin{lem} \label{lem4040.600E} If $m,n  \geq 1$ are a pair of integers, then,
	\begin{equation} \label{eq4040.600E}
		\gcd(m,n)=\sum_{d\mid (m,n)}\varphi(d).\nonumber
	\end{equation}
\end{lem}

\begin{proof}[\textbf{Proof}] The claim follows from the additive to multiplicative relation
	\begin{eqnarray} \label{eq4040.610E}
		\sum_{d\mid (m,n)}\varphi(d)&=&\prod_{p^v\mid \mid (m,n)}\left(1+ \varphi(p)+\varphi(p^2)+\cdots +\varphi(p^v)\right) \\[.2cm]
		&=&\prod_{p^v\mid \mid (m,n)}p^v\nonumber \\[.2cm] &=&\gcd(m,n)\nonumber,
	\end{eqnarray}	
	where $p^v\mid \mid (m,n)$ is the maximal prime power divisor.
\end{proof}

\begin{lem} \label{lem4040.625E} If $m,n  \geq 1$ are a pair of integers, then,
	\begin{equation} \label{eq4040.625E}
		\frac{1}{[m,n]}=\frac{1}{mn}\sum_{d\mid (m,n)}\varphi(d).\nonumber
	\end{equation}
\end{lem}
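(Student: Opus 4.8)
The plan is to reduce the claim directly to Lemma \ref{lem4040.600E} together with the classical identity relating the gcd and lcm of two integers. First I would recall that for any pair of integers $m,n\geq 1$ one has $mn=[m,n]\cdot(m,n)$; this is elementary and can be seen from the prime factorizations, since for each prime $p$ with $p^a\mid\mid m$ and $p^b\mid\mid n$, the exponent of $p$ in $mn$ is $a+b=\max(a,b)+\min(a,b)$, which is the sum of the exponents of $p$ in $[m,n]$ and in $(m,n)$.

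From $mn=[m,n](m,n)$ we get $\dfrac{1}{[m,n]}=\dfrac{(m,n)}{mn}$, and substituting the expression $(m,n)=\sum_{d\mid(m,n)}\varphi(d)$ supplied by Lemma \ref{lem4040.600E} yields exactly
\begin{equation}
	\frac{1}{[m,n]}=\frac{1}{mn}\sum_{d\mid (m,n)}\varphi(d),\nonumber
\end{equation}
as required. There is essentially no obstacle here: the statement is a one-line consequence of the preceding lemma, and the only nontrivial ingredient — the multiplicativity argument behind $mn=[m,n](m,n)$ — is standard and was already used implicitly in the proof of Lemma \ref{lem4040.600E}. The point of recording it separately is presumably that the right-hand side, being a sum over divisors of $(m,n)$, is the form in which $1/[m,n]$ will be fed into the double sums $\sum_{m,n\leq x} f(m,n)/[m,n]$ analyzed later in Section \ref{S4040}.
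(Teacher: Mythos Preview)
Your proof is correct and follows exactly the same route as the paper: rewrite $1/[m,n]=\gcd(m,n)/(mn)$ via the identity $mn=[m,n](m,n)$, then invoke Lemma~\ref{lem4040.600E} to replace $\gcd(m,n)$ by $\sum_{d\mid(m,n)}\varphi(d)$. The only difference is that you spell out the $mn=[m,n](m,n)$ step via prime factorizations, which the paper takes for granted.
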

\begin{proof}[\textbf{Proof}] Use Lemma \ref{lem4040.600E}, to transform the denominator as follows.
	\begin{equation}\label{eq4040.630E}
		\frac{1}{[m,n]}
		=\frac{\gcd(m,n)}{mn} =\frac{1}{mn}\sum_{d\mid (m,n)}\varphi(d).
	\end{equation}
\end{proof}
\begin{lem} \label{lem4040.650E} If $m,n  \geq 1$ are a pair of integers, then,
	\begin{equation} \label{eq4040.650E}
		\frac{1}{\varphi([m,n])}=\frac{1}{\varphi(mn)}\sum_{d\mid (m,n)}\varphi(d).\nonumber
	\end{equation}
\end{lem}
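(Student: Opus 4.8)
The plan is to mimic the proof of Lemma \ref{lem4040.625E}, replacing the multiplicative identity $[m,n]\gcd(m,n)=mn$ with its totient analogue. First I would record the standard fact that $\varphi$ satisfies $\varphi([m,n])\varphi((m,n))=\varphi(m)\varphi(n)$; this follows by checking it prime by prime, since for each prime $p$ the exponents of $p$ in $[m,n]$ and $(m,n)$ are $\max(a,b)$ and $\min(a,b)$ where $p^a\|m$, $p^b\|n$, and $\varphi(p^{\max})\varphi(p^{\min})=\varphi(p^a)\varphi(p^b)$ whenever at least one of $a,b$ is positive (both sides equal $p^{a+b-2}(p-1)^2$), while if $a=b=0$ all four factors are $1$. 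Hence
\begin{equation}\label{eq4040.655E}
\frac{1}{\varphi([m,n])}=\frac{\varphi((m,n))}{\varphi(m)\varphi(n)}=\frac{\varphi((m,n))}{\varphi(mn)}\cdot\frac{\varphi(mn)}{\varphi(m)\varphi(n)}.\nonumber
\end{equation}
Since $\varphi(mn)=\varphi(m)\varphi(n)\gcd(m,n)/\varphi(\gcd(m,n))$ is not quite as clean, I would instead argue directly: the displayed identity $\varphi([m,n])\varphi((m,n))=\varphi(m)\varphi(n)=\varphi(mn)\cdot\varphi(m)\varphi(n)/\varphi(mn)$ is circular, so the cleanest route is simply $1/\varphi([m,n])=\varphi((m,n))/(\varphi(m)\varphi(n))$, and then to observe that $\varphi(m)\varphi(n)/\varphi(mn)$ need not be $1$, so one must be careful about whether the lemma intends $\varphi(mn)$ or $\varphi(m)\varphi(n)$ in the denominator.

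To reconcile with the stated form, note that the lemma as written has $\varphi(mn)$ in the denominator, so I would verify it multiplicatively prime by prime. Both sides are multiplicative in the relevant sense: fixing a prime $p$ with $p^a\|m$ and $p^b\|n$, the local contribution to the left side is $1/\varphi(p^{\max(a,b)})$ and to the right side is $\sum_{0\le j\le\min(a,b)}\varphi(p^j)/\varphi(p^{a+b})$. Using $\sum_{0\le j\le k}\varphi(p^j)=p^k=\gcd$ local factor, the right side local factor becomes $p^{\min(a,b)}/\varphi(p^{a+b})$. So the identity reduces to checking $1/\varphi(p^{\max(a,b)})=p^{\min(a,b)}/\varphi(p^{a+b})$, i.e. $\varphi(p^{a+b})=p^{\min(a,b)}\varphi(p^{\max(a,b)})$. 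For $a,b\ge 1$ this reads $p^{a+b-1}(p-1)=p^{\min}\cdot p^{\max-1}(p-1)=p^{a+b-1}(p-1)$, which holds; for $a=0$ or $b=0$ both sides are $\varphi(p^{\max(a,b)})$, which also holds. Multiplying over all primes $p$ gives the result.

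The only genuine obstacle is the bookkeeping around whether $\varphi(mn)$ should be read as the arithmetic function $\varphi$ evaluated at the integer $mn$ or as the product $\varphi(m)\varphi(n)$; these differ when $(m,n)>1$, and the prime-by-prime computation above confirms that the stated identity is correct with the former reading precisely because the extra factor $\sum_{d\mid(m,n)}\varphi(d)=\gcd(m,n)$ on the right compensates for the discrepancy $\varphi(mn)\ne\varphi(m)\varphi(n)$. I would therefore present the proof as: invoke Lemma \ref{lem4040.600E} to replace $\sum_{d\mid(m,n)}\varphi(d)$ by $\gcd(m,n)$, reducing the claim to $\varphi(mn)=\gcd(m,n)\,\varphi([m,n])$, and then verify this last identity by comparing $p$-adic valuations of $\varphi$ on both sides as above. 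This is routine and short.
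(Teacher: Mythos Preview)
Your final argument is correct: after invoking Lemma \ref{lem4040.600E} to collapse $\sum_{d\mid(m,n)}\varphi(d)$ to $\gcd(m,n)$, the statement reduces to the identity $\varphi(mn)=\gcd(m,n)\,\varphi([m,n])$, which you verify prime-power by prime-power. The paper reduces to the same identity but verifies it in one stroke via the Euler product $\varphi(k)=k\prod_{p\mid k}(1-1/p)$: since $[m,n]$ and $mn$ have the same prime support, $\prod_{p\mid [m,n]}(1-1/p)=\prod_{p\mid mn}(1-1/p)$, and combining with $[m,n]=mn/\gcd(m,n)$ gives the result immediately. Your local computation and the paper's global product formula are equivalent in content; the paper's route is just shorter and avoids the case split on whether an exponent is zero. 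The exploratory detour through $\varphi([m,n])\varphi((m,n))=\varphi(m)\varphi(n)$ at the start of your write-up should be dropped, since as you yourself note it leads to $\varphi(m)\varphi(n)$ rather than $\varphi(mn)$ and does not feed into the final argument.
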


\begin{proof}[\textbf{Proof}] Substitute the identity \eqref{eq4040.020} to transform the denominator as	 follows.
	\begin{eqnarray}\label{eq4040.270M}
		\frac{1}{\varphi([m,n])}&=&\frac{1}{[m,n]}\prod_{p\mid [m,n]}\left( 1-\frac{1}{p}\right)^{-1}\\[.2cm]
		&=&\frac{\gcd(m,n)}{mn}\prod_{p\mid mn}\left( 1-\frac{1}{p}\right)^{-1} \nonumber\\[.2cm]
		&=&\frac{1}{\varphi(mn)} \sum_{d\mid (m,n)}\varphi(d)\nonumber .
	\end{eqnarray}
	The reverse the identity \eqref{eq4040.020} is used on the penultimate line of equation \eqref{eq4040.270M}, and Lemma \ref{lem4040.600E} is used to obtain the last line.
\end{proof}

%SSSSSSSSSSSSSSSSSSSSSSSSSSSSSSSSSSSSSSSSSSSSSSSS
%SSSSSSSSSSSSSSSSSSSSSSSSSSSSSSSSSSSSSSSSSSSSSSSS
%SSSSSSSSSSSSSSSSSSSSSSSSSSSSSSSSSSSSSSSSSSSSSSSS
\subsection{Elementary Estimates}

%LLLLLLLLLLLLLLLLLLLLLLLLLLLLLLLLLLLLLLLLLLLLLLLL
\begin{lem} \label{lem4040.800L} If $x \geq 1$ is a large number, then,
	\begin{equation} \label{eq4040.800L}
		\sum_{m,\;n \leq x} \frac{\log m\log n}{[m,n]}\ll(\log x)^5\nonumber
	\end{equation}
	as $x\to \infty$.
\end{lem}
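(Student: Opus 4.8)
The plan is to reduce everything to the elementary harmonic-sum estimate $\sum_{k\le y}1/k\ll\log x$ (valid for all $1\le y\le x$, with $x$ large) after first removing the least common multiple by means of Lemma \ref{lem4040.625E}. Substituting $1/[m,n]=(mn)^{-1}\sum_{d\mid(m,n)}\varphi(d)$ and interchanging the (finite, nonnegative) double sum with the divisor sum makes $d$ the outer parameter, and every $m,n\le x$ in the inner sum is then a multiple of $d$; writing $m=dm'$ and $n=dn'$ with $m',n'\le x/d$ turns the sum into
\begin{equation*}
	\sum_{m,n\le x}\frac{\log m\log n}{[m,n]}=\sum_{d\le x}\frac{\varphi(d)}{d^2}\sum_{m'\le x/d}\ \sum_{n'\le x/d}\frac{\log(dm')\log(dn')}{m'n'}.
\end{equation*}

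Next I would apply three crude bounds to the right-hand side. Since $dm'\le x$ and $dn'\le x$, both $\log(dm')$ and $\log(dn')$ lie in $[0,\log x]$, so the numerator is at most $(\log x)^2$; moreover $\varphi(d)/d^2\le 1/d$; and finally $\sum_{m'\le x/d}1/m'\le 1+\log x\ll\log x$, and likewise for the sum over $n'$. Combining these,
\begin{equation*}
	\sum_{m,n\le x}\frac{\log m\log n}{[m,n]}\ll(\log x)^2\sum_{d\le x}\frac1d\Bigl(\sum_{m'\le x/d}\frac1{m'}\Bigr)\Bigl(\sum_{n'\le x/d}\frac1{n'}\Bigr)\ll(\log x)^2\cdot(\log x)\cdot(\log x)\cdot(\log x),
\end{equation*}
which is $O\bigl((\log x)^5\bigr)$, as claimed.

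I do not anticipate any genuine obstacle: the interchange of summation is legitimate because the sum is finite with nonnegative terms, the terms with $m=1$ or $n=1$ contribute nothing (so the logarithms never cause sign trouble), and the bound $\sum_{k\le y}1/k\ll\log x$ is standard. The only point worth noting is that the argument is deliberately wasteful — one discards the true saving in $\varphi(d)/d^2$ over $1/d$ and in the truncation $m',n'\le x/d$ rather than $\le x$ — but since the target is merely the crude estimate $O((\log x)^5)$ there is no need to be economical; even a more careful treatment of the $d$-sum weighted by $(\log(x/d))^2$ lands on the same exponent $5$.
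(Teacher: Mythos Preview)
Your proof is correct and follows essentially the same route as the paper: apply Lemma~\ref{lem4040.625E}, swap the order of summation to bring $d$ outside, substitute $m=dm'$, $n=dn'$, and then bound everything crudely by powers of $\log x$. Your version is in fact slightly cleaner, since you correctly carry $\log(dm')\log(dn')$ through the change of variables before bounding it by $(\log x)^2$, and you do not impose the spurious condition $\gcd(m',n')=1$ (which the paper mentions but which does not actually arise from $d\mid(m,n)$).
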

\begin{proof}[\textbf{Proof}] Use Lemma \ref{lem4040.625E} and switch the order of summation to obtain 
	\begin{eqnarray}\label{eq4040.820L}
		\sum_{m,\;n \leq x} \frac{\log m\log n}{[m,n]}&=&	\sum_{m,n \leq x} \frac{\log m\log n}{mn}\sum_{d\mid (m,n)}\varphi(d) \\[.2cm]
		&=&	\sum_{d\leq x}\varphi(d)\sum_{\substack{m,n \leq x\\d\mid (m,n)}} \frac{\log m\log n}{mn}\nonumber .
	\end{eqnarray}
	Let $m=dr$ and $n=ds$, where $\gcd(r,s)=1$.	Substituting this change of variables, and simplifying yield the upper bound
	\begin{eqnarray}\label{eq4040.830L}
		\sum_{d\leq x}\varphi(d)\sum_{\substack{m,n \leq x\\d\mid (m,n)}} \frac{\log m\log n}{mn}
		&=&	\sum_{d\leq x}\frac{\varphi(d)}{d^2}\sum_{r,s \leq x/d} \frac{\log r\log s}{rs}\\
		&\ll&(\log x)^4	\sum_{d\leq x}\frac{\varphi(d)}{d^2} \nonumber\\ 
		&\ll&(\log x)^5\nonumber,
	\end{eqnarray}
	where $\sum_{d\leq x}\varphi(d)d^{-2}\ll \log x$.
\end{proof}

%\newpage
%NNNNNNNNNNNNNNNNNNNNNNNNNNNNNNNNNNNNNNNNNNNNNNNN
\begin{lem} \label{lem4040.250P} If $x \geq 1$ is a large number, then,
	\begin{equation} \label{eq4040.250P}
		\sum_{m,\;n \leq x} \frac{\mu(m)\mu(n)\log m\log n}{\varphi([m,n])}\gg\frac{\log x}{\log \log x}\nonumber
	\end{equation}
	is an increasing nonnegative function as $x\to \infty$, and bounded by $\log x$.
\end{lem}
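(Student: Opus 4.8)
The plan is to relate the sum $S(x):=\sum_{m,n\le x}\mu(m)\mu(n)\log m\log n/\varphi([m,n])$ to a standard sieve-type quantity via the multiplicative identity of Lemma \ref{lem4040.650E}. First I would apply that lemma to write $1/\varphi([m,n])=\frac{1}{\varphi(mn)}\sum_{d\mid(m,n)}\varphi(d)$ and switch the order of summation, pulling the $d$-sum outside. Since $\mu(m)\mu(n)$ forces $m,n$ squarefree, on the set $d\mid(m,n)$ we may write $m=dr$, $n=ds$ with $d,r,s$ pairwise coprime and squarefree; then $\varphi(mn)=\varphi(d)^2\varphi(r)\varphi(s)$ up to the coprimality bookkeeping, so the $\varphi(d)$ from the numerator partially cancels and one is left with an expression of the shape $\sum_{d\le x}\frac{\mu(d)^2}{\varphi(d)}\big(\text{inner double sum in }r,s\big)$, where the inner sum again has the same multiplicative skeleton but with $\log(dr)\log(ds)$ expanded via $\log(dr)=\log d+\log r$.

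The key step is to recognize that the diagonal-type contribution dominates and is positive. Expanding $\log(dr)\log(ds)=(\log d)^2+\log d(\log r+\log s)+\log r\log s$ and using that $\sum_{r\le y}\mu(r)\log r/\varphi(r)$ and related partial sums are controlled (they behave like $O(1)$ or contribute a main term of size $\asymp 1$ coming from the Dirichlet series $\sum \mu(r)/\varphi(r)r^{-s}$, whose behavior near $s=0$ is governed by $1/\zeta$-type factors), the bulk of $S(x)$ reduces to something comparable to $\sum_{d\le x}\frac{\mu(d)^2(\log d)^2}{\varphi(d)}$ times a positive constant. That last sum is a classical one: $\sum_{d\le x}\mu(d)^2/\varphi(d)\sim \log x$ and, by partial summation against $(\log d)^2$ (or directly), $\sum_{d\le x}\mu(d)^2(\log d)^2/\varphi(d)\asymp (\log x)^3$. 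To land on the claimed lower bound $\gg \log x/\log\log x$ rather than something larger, I expect the argument actually keeps only a truncated or weighted piece — e.g. restricting $d$ to a suitable range, or noting cancellation in the $r,s$ sums that cuts the power of $\log$ down — so I would track the surviving main term carefully and invoke a lower bound for the relevant restricted sum over $\varphi$, such as $\sum_{d\le x,\,d\text{ squarefree}}1/\varphi(d)\gg \log x/\log\log x$ type estimates (which follow from Mertens-type bounds, the $\log\log x$ loss entering through the maximal order of $\sigma(d)/d$ or $d/\varphi(d)$).

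For nonnegativity and monotonicity: I would argue that $S(x)$ can be written, after the change of variables above, as a sum of squares or a manifestly nonnegative quadratic form in the truncated Mobius-weighted quantities — this is the same mechanism that makes \eqref{eq4040.040} positive, and Lemma \ref{lem4040.650E} is exactly the tool that exposes it. Concretely, $\sum_{m,n\le x}\frac{\mu(m)\mu(n)f(m)f(n)}{\varphi([m,n])}=\sum_{d\le x}\frac{1}{\varphi(d)}\Big(\sum_{\substack{k\le x/d\\ \gcd(k,d)=1}}\frac{\mu(dk)f(dk)}{\varphi(k)}\Big)^2\ge 0$ for any real-valued $f$, after one checks the coprimality rearrangement; taking $f(n)=\log n$ gives the statement, and since each new term added as $x$ increases is nonnegative, the sum is nondecreasing. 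The upper bound $S(x)\ll\log x$ then follows by bounding the same expression trivially, e.g. estimating the inner square by the corresponding unrestricted sum and using $\sum_d 1/\varphi(d)$ convergence rates, or simply by comparison with Lemma \ref{lem4040.800L} after inserting $\varphi([m,n])\gg [m,n]/\log\log[m,n]$.

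The main obstacle will be step two: getting the \emph{lower} bound in the correct shape. Upper bounds and nonnegativity are soft consequences of the sum-of-squares identity, but extracting $\gg\log x/\log\log x$ requires showing the diagonal ($d$ large, $k=1$) term is not destroyed by cancellation from the off-diagonal terms, and controlling the error sums $\sum_{k\le y,\gcd(k,d)=1}\mu(dk)\log(dk)/\varphi(k)$ uniformly in $d$ — these are $1/\zeta$-flavored sums and need a Mertens-type or elementary sieve bound with explicit dependence on $d$, with the $\log\log x$ factor being precisely the price paid for the worst-case behavior of $d/\varphi(d)$ over the range of $d$.
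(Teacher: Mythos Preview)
Your sum-of-squares rewriting is correct and matches the paper's first move: after Lemma~\ref{lem4040.650E} and the substitution $m=dr$, $n=ds$, one arrives at
\[
B(x)=\sum_{d\le x}\frac{\mu^2(d)}{d}\Biggl(\sum_{\substack{r\le x/d\\(r,d)=1}}\frac{\mu(r)\log(dr)}{\varphi(r)}\Biggr)^{2},
\]
which gives nonnegativity immediately. (Note the outer weight is $\mu^2(d)/d$, not $\mu^2(d)/\varphi(d)$: for squarefree $d$ one has $\varphi(d)/\varphi(d^2)=1/d$.) The genuine gap is in your treatment of the inner sum. You expand $\log(dr)=\log d+\log r$ and then declare the $(\log d)^2$ piece dominant, leading you to $\sum_d \mu^2(d)(\log d)^2/\varphi(d)\asymp(\log x)^3$ and a subsequent ad hoc ``truncation'' to force the answer down. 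This is backwards. The $(\log d)^2$ term is multiplied by $\bigl(\sum_r \mu(r)/\varphi(r)\bigr)^2$, and that sum is \emph{not} of size $\asymp 1$: it is $O(e^{-c\sqrt{\log x}})$ by the prime number theorem (Lemma~\ref{lem4040.850P}(i)). Likewise the cross term vanishes up to the same error. What survives is the $\log r\log s$ piece, and the key input you are missing is the Goldston--Y{\i}ld{\i}r{\i}m evaluation (Lemma~\ref{lem4040.850P}(ii), packaged in Lemma~\ref{lem4040.800P}):
\[
\sum_{\substack{r\le x/d\\(r,d)=1}}\frac{\mu(r)\log(dr)}{\varphi(r)}=\mathfrak{G}(d)+O\bigl(e^{-b\sqrt{\log x}}\bigr),
\]
a \emph{bounded} quantity with $\mathfrak{G}(d)>1$ on the relevant $d$. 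Squaring and summing against $\mu^2(d)/d$ then gives $B(x)\gg\sum_{d\le x}\mu^2(d)/d\asymp\log x$, from which the stated (in fact wasteful) lower bound $\gg\log x/\log\log x$ follows at once; no truncation, no $d/\varphi(d)$ extremal argument, and no mystery about the $\log\log x$. Your upper-bound sketch via $\varphi([m,n])\gg[m,n]/\log\log[m,n]$ and Lemma~\ref{lem4040.800L} would only yield $\ll(\log x)^5\log\log x$; the bound $\ll\log x$ comes from the same identity, using that $\mathfrak{G}(d)$ is bounded.
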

\begin{proof}[\textbf{Proof}] By Lemma \ref{lem4040.650E}, the finite sum transforms as	
	\begin{eqnarray}\label{eq4040.260P}
		B(x)&=&		\sum_{m,\;n \leq x} \frac{\mu(m)\mu(n)\log m\log n}{\varphi([m,n])}\\&=&	\sum_{m,n \leq x} \frac{\mu(m)\mu(n)\log m\log n}{\varphi(mn)}\sum_{d\mid (m,n)}\varphi(d)\nonumber \\[.2cm]
		&=&	\sum_{d\leq x}\varphi(d)\sum_{\substack{m,n \leq x\\d\mid (m,n)}} \frac{\mu(m)\mu(n)\log m\log n}{\varphi(mn)}\nonumber.
	\end{eqnarray}
	Replace the change of variables $m=dr$ and $n=ds$, where $r,s\geq 1$ are squarefree integers such that $\gcd(r,s)=1$ to obtain the expression
	\begin{eqnarray}\label{eq4040.265P}
		B(x)&=&	\sum_{d\leq x}\frac{\varphi(d)\mu^2(d)}{\varphi(d^2)}\sum_{\substack{r,s \leq x/d\\(d,rs)=1\\\gcd(r,s)=1}} \frac{\mu(r)\mu(s)\log dr\log ds}{\varphi(rs)} \\
		&=&	\sum_{d\leq x}\frac{\mu^2(d)}{d}\left(\sum_{\substack{r \leq x/d\\(d,r)=1}} \frac{\mu(r)\log dr}{\varphi(r)}\right)^2  \nonumber \\
		&>&0\nonumber.
	\end{eqnarray}
Next, the asymptotic formula for the inner sum given in Lemma \ref{lem4040.800P} leads to 
\begin{eqnarray}\label{eq4040.280P}
B(x)	&=&	\sum_{d\leq x}\frac{\mu^2(d)}{d}\left(\sum_{\substack{r \leq x/d\\(d,r)=1}} \frac{\mu(r)\log dr}{\varphi(r)}\right)^2 \\
&=&	\sum_{d\leq x}\frac{\mu^2(d)}{d}\left(\mathfrak{G}(d)+O\left(e^{-b\sqrt{\log x} }\right)\right)^2 \nonumber\\
&=&	\sum_{d\leq x}\frac{\mu^2(d)\mathfrak{G}^2(d)}{d}+O\left((
\log x)e^{-b_1\sqrt{\log x} }\right) \nonumber.
\end{eqnarray}
The singular series $\mathfrak{G}(d)>1$ is an absolutely convergent series, see \eqref{eq4040.290P} below, and has the asymptotic form specified in \eqref{eq4040.295P}. Together, these estimates yield the lower bound
\begin{eqnarray}\label{eq4040.285P}
	B(x)	
	&\gg&	\sum_{d\leq x}\frac{\mu^2(d)}{d}+O\left(e^{-b_2\sqrt{\log x} }\right) \\
	&\gg&\frac{\log x}{\log \log x}
	+O\left(e^{-b_2\sqrt{\log x} }\right)\nonumber \\	&\gg&\frac{\log x}{\log \log x}\nonumber
\end{eqnarray}
since  
\begin{equation}\label{eq4040.270P}
	\frac{\log x}{\log \log x}\ll\sum_{d\leq x}\frac{\mu^2(d)}{d}=\frac{6}{\pi^2}\log x+O\left( \frac{1}{\log x}\right).
\end{equation} 
Hence, the product $B(x)$ satisfies the inequality
	\begin{equation}\label{eq4040.287P}
		\frac{\log x}{\log \log x}\ll		\sum_{m,\;n \leq x} \frac{\mu(m)\mu(n)\log m\log n}{\varphi([m,n])}\ll\log  x,
	\end{equation} 
	which is an unbounded function as $x\to \infty$.
\end{proof}
%SSSSSSSSSSSSSSSSSSSSSSSSSSSSSSSSSSSSSSSSSSSSSSSS
%SSSSSSSSSSSSSSSSSSSSSSSSSSSSSSSSSSSSSSSSSSSSSSSS
%SSSSSSSSSSSSSSSSSSSSSSSSSSSSSSSSSSSSSSSSSSSSSSSS
\subsection{The Singular Series for Prime Pairs}
For each integer $d\geq1$ the singular series
\begin{equation}\label{eq4040.290P}
\mathfrak{G}(d)\begin{cases}
=0&\text{ if }d=2m+1,\\
> 1&\text{ if }d=2m, 
\end{cases}	
\end{equation}
is a small constant $>1$. More precisely, for $m\geq2$, the singular series is given by the infinite product 
\begin{equation}\label{eq4040.295P}
	\mathfrak{G}(2m)=2C_2\prod_{2<p\mid m}\left( \frac{p-1}{p-2}\right) >1.
\end{equation}
The first case $\mathfrak{G}(2)=2C_2>1$ is called the twin prime constant, see Conjecture \ref{conj3030.900}. \\

%NNNNNNNNNNNNNNNNNNNNNNNNNNNNNNNNNNNNNNNNNNNNNNNN
\begin{lem} \label{lem4040.850P} {\normalfont(\cite[Lemma 2.1]{GC2001}) }Let $\geq2$ be a fixed integer, and let
	$x \geq 1$ be a large number. The following statements are valid.
\begin{enumerate} [font=\normalfont, label=(\roman*)]
\item
$\displaystyle
\sum_{\substack{n \leq x\\\gcd(m,n)=1}} \frac{\mu(n)}{\varphi(n)}=O\left(e^{-c\sqrt{\log x} }\right),$
\item
$\displaystyle
\sum_{\substack{n \leq x\\\gcd(m,n)=1}} \frac{\mu(n)\log n}{\varphi(n)}=\mathfrak{G}( m)+O\left(e^{-c\sqrt{\log x} }\right),$
\end{enumerate}
where $c>0$ is a constant.
\end{lem}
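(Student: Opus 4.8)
The plan is to reduce both estimates to the prime number theorem bounds $\sum_{n\le y}\mu(n)/n\ll e^{-c\sqrt{\log y}}$ and $\sum_{n\le y}\mu(n)(\log n)/n=-1+O(e^{-c\sqrt{\log y}})$, together with their analogues for summands restricted by a coprimality condition, all of which rest on the zero-free region of $\zeta(s)$. The bridge is the elementary identity $n/\varphi(n)=\sum_{d\mid n}\mu^{2}(d)/\varphi(d)$, verified on prime powers exactly as in Lemma \ref{lem4040.600E}. Writing $\mu(n)/\varphi(n)=\big(\mu(n)/n\big)\big(n/\varphi(n)\big)$, inserting this identity, and interchanging the order of summation through $n=de$ (so that $\gcd(d,e)=1$ and $\mu(n)=\mu(d)\mu(e)$), one obtains
\begin{equation}
\sum_{\substack{n\le x\\ \gcd(m,n)=1}}\frac{\mu(n)}{\varphi(n)}=\sum_{\substack{d\le x\\ \gcd(d,m)=1}}\frac{\mu(d)}{d\,\varphi(d)}\sum_{\substack{e\le x/d\\ \gcd(e,md)=1}}\frac{\mu(e)}{e},\nonumber
\end{equation}
and, after splitting $\log n=\log d+\log e$, a parallel decomposition for the sum in part (ii).

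\textbf{Part (i).} Here I would use the uniform estimate $\sum_{e\le y,\,\gcd(e,k)=1}\mu(e)/e\ll\big(k/\varphi(k)\big)e^{-c\sqrt{\log y}}$, obtained from the unrestricted bound either by a contour integral using $\sum_{\gcd(e,k)=1}\mu(e)e^{-s}=\zeta(s)^{-1}\prod_{p\mid k}(1-p^{-s})^{-1}$ and the zero-free region, or by Mobius inversion over the squarefree divisors of $k$. Then split the outer sum at $d=\sqrt{x}$: for $d\le\sqrt{x}$ the inner sum is $\ll(\log\log x)\,e^{-\frac{1}{2}c\sqrt{\log x}}$ while $\sum_{d\ge1}\mu^{2}(d)/(d\varphi(d))$ converges, and for $d>\sqrt{x}$ the inner sum is trivially $O(\log x)$ while $\sum_{d>\sqrt{x}}\mu^{2}(d)/(d\varphi(d))\ll x^{-1/2+o(1)}$. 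Both contributions are $O(e^{-c'\sqrt{\log x}})$, which proves (i).

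\textbf{Part (ii).} The piece carrying $\log d$ is treated just as in (i), the extra $\log d$ being harmless because $\sum_{d>\sqrt{x}}\mu^{2}(d)(\log d)/(d\varphi(d))$ is still power-savingly small. For the piece carrying $\log e$ one needs $\sum_{e\le y,\,\gcd(e,k)=1}\mu(e)(\log e)/e=-k/\varphi(k)+O\big((k/\varphi(k))e^{-c\sqrt{\log y}}\big)$; the constant $-k/\varphi(k)$ is read off by differentiating $\prod_{p\nmid k}(1-p^{-s})=\zeta(s)^{-1}\prod_{p\mid k}(1-p^{-s})^{-1}$ at $s=1$ (using that $\zeta(s)^{-1}$ has a simple zero there with derivative $1$), and the error is of PNT strength by the same contour argument. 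Substituting, splitting again at $d=\sqrt{x}$, and using $\varphi(md)=\varphi(m)\varphi(d)$ since $\gcd(m,d)=1$, the main term collapses to $-\dfrac{m}{\varphi(m)}\sum_{d\le\sqrt{x},\,\gcd(d,m)=1}\mu(d)/\varphi(d)^{2}$; this series converges absolutely, so it may be completed to $d=\infty$ with an error $O(x^{-1/2+o(1)})$ and summed as the Euler product $-\dfrac{m}{\varphi(m)}\prod_{p\nmid m}\big(1-(p-1)^{-2}\big)$. A short computation matches this with $\mathfrak{G}(m)$ of \eqref{eq4040.290P}--\eqref{eq4040.295P}: the factor $p=2$ equals $1-(2-1)^{-2}=0$, so the product vanishes when $m$ is odd, while for even $m$ it equals $\pm\,2C_{2}\prod_{2<p\mid m}\tfrac{p-1}{p-2}=\pm\,\mathfrak{G}(m)$. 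The sign does not affect the use of this lemma in \eqref{eq4040.265P}, where the relevant sum is squared.

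\textbf{Main difficulty.} The delicate point is the uniformity in the modulus $k=md$ of the two inner-sum estimates: one must keep the exponential saving $e^{-c\sqrt{\log y}}$ --- hence invoke the zero-free region of $\zeta$ (or of Dirichlet $L$-functions, if one argues directly with the coprimality condition) --- while losing at most a factor $k/\varphi(k)=O(\log\log k)$ in $k$, so that the accumulated error still simplifies to $O(e^{-c'\sqrt{\log x}})$ after summation over $d\le x$. The remaining ingredients --- the convolution identity, the interchange of summations, the splitting at $\sqrt{x}$, and the convergence of the tail series --- are entirely routine.
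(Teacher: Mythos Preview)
The paper does not prove this lemma at all; it is quoted verbatim from \cite[Lemma~2.1]{GC2001} and used as a black box in Lemma~\ref{lem4040.800P}. So there is no ``paper's proof'' to compare against. Your outline is the standard derivation and is sound: the convolution $n/\varphi(n)=\sum_{d\mid n}\mu^{2}(d)/\varphi(d)$ together with $n=de$, $\gcd(d,e)=1$, reduces both parts to PNT-strength estimates for $\sum_{e\le y,\ \gcd(e,k)=1}\mu(e)e^{-1}$ and its $\log$-weighted companion, uniformly in $k$; these follow from the classical zero-free region of $\zeta$ exactly as you indicate, and the dyadic split at $d=\sqrt{x}$ handles the tail. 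The ``main difficulty'' you flag --- uniformity in $k=md$ with only an $O(k/\varphi(k))$ loss --- is the right thing to worry about, and the contour/Perron argument you sketch delivers it.

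Your sign remark in part~(ii) is correct and worth recording. The Euler-product computation you carry out gives the main term $-\dfrac{m}{\varphi(m)}\prod_{p\nmid m}\bigl(1-(p-1)^{-2}\bigr)$, which for even $m$ equals $-2C_{2}\prod_{2<p\mid m}\tfrac{p-1}{p-2}=-\mathfrak{G}(m)$, not $+\mathfrak{G}(m)$ as printed. Since the only use of this lemma in the paper is through Lemma~\ref{lem4040.800P} and then inside the square in \eqref{eq4040.265P}, the discrepancy is indeed invisible downstream; but strictly speaking you have proved the lemma with $\mathfrak{G}(m)$ replaced by $-\mathfrak{G}(m)$.
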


%NNNNNNNNNNNNNNNNNNNNNNNNNNNNNNNNNNNNNNNNNNNNNNNN
\begin{lem} \label{lem4040.800P} If $x \geq 1$ is a large number, then,
\begin{equation} \label{eq4040.800P}
\sum_{\substack{n \leq x/d\\(d,n)=1}} \frac{\mu(n)\log dn}{\varphi(n)}=\mathfrak{G}( m)+O\left(e^{-b\sqrt{\log x} }\right),\nonumber
\end{equation}
where $b>0$ is a constant.
\end{lem}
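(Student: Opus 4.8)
The plan is to read this estimate off Lemma \ref{lem4040.850P} after separating the logarithm. First I would write $\log(dn) = \log d + \log n$ and split
\begin{equation}
\sum_{\substack{n \leq x/d\\(d,n)=1}} \frac{\mu(n)\log dn}{\varphi(n)} = (\log d)\sum_{\substack{n \leq x/d\\(d,n)=1}} \frac{\mu(n)}{\varphi(n)} + \sum_{\substack{n \leq x/d\\(d,n)=1}} \frac{\mu(n)\log n}{\varphi(n)}. \nonumber
\end{equation}
Applying part (i) of Lemma \ref{lem4040.850P} to the first sum and part (ii) to the second --- in both cases with modulus $d$ and truncation parameter $x/d$ in place of $x$ --- collapses the right-hand side to the main term $\mathfrak{G}(d)$ together with the error
\begin{equation}
(\log d)\, O\!\left(e^{-c\sqrt{\log(x/d)}}\right) + O\!\left(e^{-c\sqrt{\log(x/d)}}\right), \nonumber
\end{equation}
where $c>0$ is the constant supplied by Lemma \ref{lem4040.850P}.

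Next I would replace $\sqrt{\log(x/d)}$ by $\sqrt{\log x}$ and absorb the stray factor $\log d$. Because $d$ is a fixed modulus --- equivalently, because in the application inside \eqref{eq4040.265P}--\eqref{eq4040.280P} it ranges only over $d\le x^{1/2}$ --- one has $\log(x/d)\ge \tfrac12\log x$ and $\log d\le \log x$, so
\begin{equation}
(\log x)\, e^{-c\sqrt{\log(x/d)}} \le (\log x)\, e^{-(c/\sqrt2)\sqrt{\log x}} = O\!\left(e^{-b\sqrt{\log x}}\right) \nonumber
\end{equation}
for any constant $0<b<c/\sqrt2$. Combining the two error contributions and relabelling the constant yields $\sum_{n\le x/d,\,(d,n)=1}\mu(n)\log(dn)/\varphi(n)=\mathfrak{G}(d)+O(e^{-b\sqrt{\log x}})$, which is the assertion (the fixed integer $m$ of Lemma \ref{lem4040.850P} is taken to be $d$). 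The parity dichotomy recorded in \eqref{eq4040.290P} is then automatic: the main term vanishes for odd $d$, and equals $\mathfrak{G}(2m)=2C_2\prod_{2<p\mid m}\tfrac{p-1}{p-2}>1$ for $d=2m$ by \eqref{eq4040.295P}.

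I expect the only genuine subtlety to be the uniformity in $d$: Lemma \ref{lem4040.850P} is stated for a fixed modulus, so to feed it into the $d$-sum of \eqref{eq4040.265P} one must either truncate to $d$ bounded by a small power of $x$ (for $d$ close to $x$ the error above degenerates to $O(1)$, since $\sqrt{\log(x/d)}$ is then bounded) and argue that the discarded moduli contribute negligibly, or else replace Lemma \ref{lem4040.850P} by a Siegel--Walfisz-type statement valid uniformly for $d\le(\log x)^{A}$. In either case the sole analytic input is the prime number theorem in arithmetic progressions with the classical error term $O(e^{-c\sqrt{\log x}})$, from which Lemma \ref{lem4040.850P}, and hence the present lemma, follows by partial summation; the rest of the argument is pure bookkeeping.
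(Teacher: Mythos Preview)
Your proof is correct and follows essentially the same route as the paper: split $\log(dn)=\log d+\log n$, apply the two parts of Lemma~\ref{lem4040.850P}, and absorb the $\log d$ factor into the exponential error. If anything you are more careful than the paper, which writes the error as $O(e^{-c\sqrt{\log x}})$ directly rather than $O(e^{-c\sqrt{\log(x/d)}})$, and you correctly flag the uniformity-in-$d$ issue (and the typo $\mathfrak{G}(m)$ for $\mathfrak{G}(d)$) that the paper leaves implicit.
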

\begin{proof}[\textbf{Proof}] A simple expansion of the finite sum into two finite sums and repeated applications of Lemma \ref{lem4040.850P} return

\begin{eqnarray}\label{eq4040.805P}
	\sum_{\substack{n \leq x/d\\(d,n)=1}} \frac{\mu(n)\log dn}{\varphi(n)}&=&\sum_{\substack{n \leq x/d\\ \gcd(d,n)=1}}\frac{\mu(n)\log n}{\varphi(n)}+(\log d)\sum_{\substack{n \leq x/d\\ \gcd(d,n)=1}}\frac{\mu(n)}{\varphi(n)}\\
&=&\mathfrak{G}( m)+O\left(e^{-c\sqrt{\log x} }\right)+O\left((\log d)e^{-c\sqrt{\log x} }\right) \nonumber\\
&=&\mathfrak{G}( m)+O\left(e^{-b\sqrt{\log x} }\right),\nonumber
\end{eqnarray} 
where $c>b>0$ are constants.
\end{proof}

%SSSSSSSSSSSSSSSSSSSSSSSSSSSSSSSSSSSSSSSSSSSSSSSS
%SSSSSSSSSSSSSSSSSSSSSSSSSSSSSSSSSSSSSSSSSSSSSSSS
%SSSSSSSSSSSSSSSSSSSSSSSSSSSSSSSSSSSSSSSSSSSSSSSS
\subsection{Sum of Twisted Log Function}
\begin{lem}\label{lem4040.600} If $x$ is a large number, then
	$$\sum_{n\leq x}\mu(n) \log n
	=O\left(xe^{-c\sqrt{\log x} }\right),$$
	where $ c>0$ is an absolute constant. 
\end{lem}
\begin{proof}[\textbf{Proof}] Recall the asymptotic formula $M(t)=\sum_{n\leq t}\mu(n)=O\left(te^{-a\sqrt{\log t} }\right)$, confer \cite[p.\ 424]{IK2004}, \cite[p.\ 385]{MV2007}. Now rewrite it as an integral and use partial summation.
	\begin{eqnarray}\label{eq4040.610}
		\sum_{n\leq x}\mu(n) \log n
		&=&\int_2^x(\log t)d M(t)\\
		&=&O\left(x(\log x)e^{-a\sqrt{\log x} }\right)-\int_2^x\frac{M(t)}{t}dt\nonumber\\
		&=&O\left(xe^{-c\sqrt{\log x} }\right),\nonumber
	\end{eqnarray}
	where $a>c>0$ are constants. 
\end{proof}
%SSSSSSSSSSSSSSSSSSSSSSSSSSSSSSSSSSSSSSSSSSS
%SSSSSSSSSSSSSSSSSSSSSSSSSSSSSSSSSSSSSSSSSSS
%SSSSSSSSSSSSSSSSSSSSSSSSSSSSSSSSSSSSSSSSSSS
%SSSSSSSSSSSSSSSSSSSSSSSSSSSSSSSSSSSSSSSSSSS
\section{\textbf{Integers in Arithmetic Progressions}}\label{A2002}
An effective asymptotic formula for the number of integers in arithmetic progressions is derived in Lemma \ref{lemA2002.400W}. The derivation is based on a version of the basic large sieve inequality stated below.
%NNNNNNNNNNNNNNNNNNNNNNNNNNNNNNNNNNNNNNNNNN
%NNNNNNNNNNNNNNNNNNNNNNNNNNNNNNNNNNNNNNNNNN
\begin{thm}\label{thmA2002.200W} Let $x$ be a large number and let $Q
	\leq x$. If $\{a_n:n\geq1\}$ is a sequence of real number, then
	\begin{equation}\label{eqA2002.100W}
		\sum_{q\leq Q}	q\sum_{1\leq a\leq q}\bigg |\sum_{\substack{n \leq x\\ n\equiv a \bmod q}}a_n-\frac{1}{q}\sum_{n \leq x}a_n\bigg|^2\leq Q\left(10Q+2\pi x \right) \sum_{n \leq x}|a_n|^2\nonumber.
	\end{equation}
\end{thm}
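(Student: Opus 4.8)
The plan is to reduce the arithmetic large sieve on the left to a bound for squared exponential sums over Farey fractions, and then to invoke the analytic form of the large sieve one modulus at a time. Throughout write $e(t)=e^{2\pi i t}$ and $S(\alpha)=\sum_{n\le x}a_ne(n\alpha)$. First I would detect the congruence $n\equiv a\bmod q$ by additive characters, using that $\tfrac1q\sum_{b=0}^{q-1}e\bigl((n-a)b/q\bigr)$ equals $1$ when $n\equiv a\bmod q$ and $0$ otherwise, and then split off the frequency $b=0$. Since $S(0)=\sum_{n\le x}a_n$, this produces the identity
\[
\sum_{\substack{n\le x\\ n\equiv a\bmod q}}a_n-\frac1q\sum_{n\le x}a_n=\frac1q\sum_{b=1}^{q-1}e\!\left(-\frac{ab}{q}\right)S\!\left(\frac bq\right).
\]

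Next I would take squared moduli and sum over the complete residue system $1\le a\le q$. The orthogonality relation that $\sum_{a=1}^{q}e\bigl(-a(b-b')/q\bigr)$ equals $q$ when $b=b'$ and $0$ otherwise — valid for $b,b'\in\{1,\dots,q-1\}$ because then $|b-b'|\le q-2$ — collapses the double sum over $b,b'$ to its diagonal and yields the exact identity
\[
q\sum_{1\le a\le q}\left|\sum_{\substack{n\le x\\ n\equiv a\bmod q}}a_n-\frac1q\sum_{n\le x}a_n\right|^{2}=\sum_{b=1}^{q-1}\left|S\!\left(\frac bq\right)\right|^{2}.
\]
Summing this over $q\le Q$ reduces Theorem~\ref{thmA2002.200W} to the estimate $\sum_{q\le Q}\sum_{b=1}^{q-1}\bigl|S(b/q)\bigr|^{2}\le Q(10Q+2\pi x)\sum_{n\le x}|a_n|^{2}$.

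For this remaining estimate I would apply the large sieve inequality separately for each $q$. For a fixed modulus $q$ the points $b/q$ with $1\le b\le q-1$ are $1/q$-spaced in $\R/\Z$, so the large sieve inequality (see, e.g., \cite{MV2007}) gives $\sum_{b=1}^{q-1}|S(b/q)|^{2}\le(x+q)\sum_{n\le x}|a_n|^{2}$. Summing over $q\le Q$ and using $\sum_{q\le Q}1\le Q$ together with $\sum_{q\le Q}q\le Q^{2}$ gives
\[
\sum_{q\le Q}\sum_{b=1}^{q-1}\left|S\!\left(\frac bq\right)\right|^{2}\le\bigl(Qx+Q^{2}\bigr)\sum_{n\le x}|a_n|^{2}\le Q\bigl(10Q+2\pi x\bigr)\sum_{n\le x}|a_n|^{2},
\]
which is the claimed bound; the generous constants $10$ and $2\pi$ leave room for whichever explicit quantitative version of the large sieve one prefers to quote.

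I expect the only genuinely delicate point to be applying the large sieve with the per-modulus spacing $1/q$ rather than with the global Farey spacing $1/Q^{2}$ of the full collection $\{b/q:q\le Q,\ 1\le b\le q-1\}$: the latter would only produce $Q(Q^{2}+x)\sum_{n\le x}|a_n|^{2}$, which is too large to yield the stated inequality once $Q$ exceeds $\sqrt x$, whereas the statement covers all $Q\le x$. The Fourier bookkeeping itself — detecting the congruence, isolating the zero frequency so that precisely the main term $\tfrac1q\sum_{n\le x}a_n$ is subtracted off, and invoking orthogonality in $a$ — is routine, but the separation of the $b=0$ term must be handled carefully for the second identity to be exact.
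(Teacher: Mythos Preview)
Your argument is correct. The paper does not actually prove this theorem: its ``proof'' consists of a reference to Davenport's book and to Gallagher's 1967 paper, so there is no route of the paper's own to compare against. Your derivation---detecting residues by additive characters, splitting off the zero frequency so that exactly $\tfrac{1}{q}\sum_{n\le x}a_n$ is subtracted, collapsing the $a$-sum by orthogonality to obtain $\sum_{b=1}^{q-1}|S(b/q)|^{2}$, and then invoking the analytic large sieve one modulus at a time with spacing $1/q$---is precisely the standard argument one finds in those references. Your remark that the per-modulus spacing is essential (because the sum here is over all residues $b\bmod q$, not just primitive ones, so the global Farey spacing $1/Q^{2}$ would only yield $Q(Q^{2}+x)\sum|a_n|^{2}$) is also on point.
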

\begin{proof}[\textbf{Proof}] The essential technical details are covered in \cite[Chapter 23]{DH2000}. This inequality is discussed in \cite{GP1967} and the literature in the theory of the large sieve. 
\end{proof}
%NNNNNNNNNNNNNNNNNNNNNNNNNNNNNNNNNNNNNNNNNN
%NNNNNNNNNNNNNNNNNNNNNNNNNNNNNNNNNNNNNNNNNN
%LLLLLLLLLLLLLLLLLLLLLLLLLLLLLLLLLLLLLLLLLL
\begin{lem} \label{lemA2002.400W} If $x \geq 1$ is a large number and $1\leq a< q \leq x$, then
	\begin{equation}\label{eqA2002.400W}
		\sum_{q\leq x}	\max_{q}\max_{1\leq a\leq q}\bigg |\sum_{\substack{n \leq x\\ n\equiv a \bmod q}}1-\frac{1}{q}\sum_{n \leq x}1\bigg|=O\left(\frac{x}{q}e^{-c\sqrt{\log x} }\right),
	\end{equation}
	where $ c>0$ is a constant. In particular,
	\begin{equation}\label{eqA2002.405W}
		\sum_{\substack{n \leq x\\ n\equiv a \bmod q}}1=\left[\frac{x}{q}\right]+O\left(\frac{x}{q}e^{-c\sqrt{\log x} }\right).
	\end{equation}
	
\end{lem}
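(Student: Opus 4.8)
The plan is to combine the exact description of an arithmetic progression with the large sieve inequality of Theorem~\ref{thmA2002.200W}, the former handling a single modulus and the latter supplying the uniformity over $q$. First I would record the elementary count: for $1\le a\le q\le x$ the integers $n\le x$ with $n\equiv a\bmod q$ are $a,\,a+q,\,\dots,\,a+kq$ with $k=\lfloor(x-a)/q\rfloor$, so that
\begin{equation}
\sum_{\substack{n\le x\\ n\equiv a\bmod q}}1=\left\lfloor\frac{x-a}{q}\right\rfloor+1=\frac{x}{q}+O(1)\nonumber
\end{equation}
uniformly in $a$, while $\tfrac1q\sum_{n\le x}1=\lfloor x\rfloor/q=x/q+O(1)$. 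Subtracting, the discrepancy in \eqref{eqA2002.405W} and \eqref{eqA2002.400W} is $O(1)$, uniformly in the residue class. This already gives \eqref{eqA2002.405W} once the $O(1)$ term is absorbed into the stated error $O\big((x/q)e^{-c\sqrt{\log x}}\big)$, which is legitimate for the moduli $q\le x\,e^{-c\sqrt{\log x}}$ that occur when the lemma is applied in Section~\ref{S2020}.

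For the summed estimate \eqref{eqA2002.400W} I would feed the indicator sequence $a_n=1$ for $n\le x$, and $a_n=0$ otherwise, into Theorem~\ref{thmA2002.200W}. Since $\sum_{n\le x}|a_n|^2=\lfloor x\rfloor\ll x$, the large sieve bound \eqref{eqA2002.100W} with $Q\le x$ yields
\begin{equation}
\sum_{q\le Q}q\sum_{1\le a\le q}\left|\sum_{\substack{n\le x\\ n\equiv a\bmod q}}1-\frac1q\sum_{n\le x}1\right|^{2}\ll Q^{2}x+Qx^{2}.\nonumber
\end{equation}
Dominating the maximum over $a$ by the $\ell^{2}$-norm over $a$, and then applying the Cauchy--Schwarz inequality in the $q$-variable with the weight $1/q$ (using $\sum_{q\le Q}1/q\ll\log x$), converts this mean-square bound into a bound for the mean on the left of \eqref{eqA2002.400W}; on the relevant window of moduli this is then refined to the asserted shape using the pointwise bound $O(1)$ from the first step, which is sharper there.

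The step I expect to need the most care is the bookkeeping of the admissible range of $q$: the factor $e^{-c\sqrt{\log x}}$ on the right of \eqref{eqA2002.400W} and \eqref{eqA2002.405W} is an improvement on the trivial count only while $q$ is a little smaller than $x$, so one must verify that this is exactly the window in which the estimate is subsequently invoked, so that the elementary $O(1)$ discrepancy may legitimately be rewritten as $O\big((x/q)e^{-c\sqrt{\log x}}\big)$. Beyond this there is essentially no arithmetic obstruction, since the number of integers in a residue class is known exactly; the role of the large sieve is only to make the error term uniform across all moduli simultaneously.
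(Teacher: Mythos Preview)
Your route is genuinely different from the paper's. The paper does not use the exact formula $\lfloor(x-a)/q\rfloor+1$ at all; instead it argues by elimination. It posits in turn that the discrepancy has the shape $E(x)=O(x^{\alpha})$, then $O(x/(\log x)^{c})$, then $O(xe^{-c\sqrt{\log x}})$, and for each candidate it substitutes the hypothetical size into the left side of Theorem~\ref{thmA2002.200W} with $a_n=1$, $Q=x$, treating the big-$O$ as a lower bound to obtain $\gg x^{3+2\alpha}$ (and analogues). Since the right side of the large sieve gives only $\ll x^{3}$, each candidate is declared contradictory, and the paper concludes that the error must take the remaining form $O\big((x/q)e^{-c\sqrt{\log x}}\big)$.

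By contrast you compute the discrepancy directly as $O(1)$ and then absorb it into the stated error on the window $q\le x\,e^{-c\sqrt{\log x}}$, with the large sieve serving only to package the averaged statement via Cauchy--Schwarz. Your argument is more elementary and has the merit of making explicit the constraint on $q$ under which \eqref{eqA2002.405W} is nontrivial, a point the paper's contradiction scheme does not isolate. The paper's method, on the other hand, is presented as pinning down the \emph{shape} of the error by ruling out competitors rather than by exhibiting a direct bound; whether one finds that style of argument persuasive, it is structurally quite different from what you propose, so you should not expect your write-up to match the paper's line by line.
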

\begin{proof}[\textbf{Proof}] Trivially, the basic finite sum satisfies the asymptotic \begin{equation}\label{eqA2002.410W}
		\sum_{n \leq x}1=[x]= x-\{x\},
	\end{equation}
	where $[x]=x-\{x\}$ is the largest integer function, and the number of integers in any equivalent class satisfies the asymptotic formula
	\begin{equation}\label{eqA2002.415W}
		\sum_{\substack{n \leq x\\ n\equiv a \bmod q}}1	=	\frac{x}{q}+E(x).
	\end{equation}
	Let $Q=x$ and let the sequence of real numbers be $a_n=1$ for  $n\geq1$. Now suppose that the error term is of the form 
	\begin{equation}\label{eqA2002.420W}
		E(x)=E_0(x)=O\left(x^{\alpha}\right),
	\end{equation}
	where $ \alpha\in(0,1]$ is a constant. Then, the large sieve inequality, Theorem \ref{thmA2002.200W}, yields the lower bound
	\begin{eqnarray}\label{eqA2002.430W}
		\sum_{q\leq x}	q\sum_{1\leq a\leq q}\bigg |\sum_{\substack{n \leq x\\ n\equiv a \bmod q}}1-\frac{1}{q}\sum_{n \leq x}1\bigg|^2
		&=&\sum_{q\leq x}	q\sum_{1\leq a\leq q}\bigg |\frac{x}{q}+O\left(x^{\alpha}\right)-\frac{x-\{x\}}{q}\bigg|^2\nonumber\\[.2cm]
		&\gg&\sum_{q\leq x}	q\sum_{1\leq a\leq q}\bigg |x^{\alpha}+\frac{\{x\}}{q}\bigg|^2\nonumber\\[.2cm]
		&\gg&\sum_{q\leq x}	q\sum_{1\leq a\leq q}\left |x^{\alpha}\right|^2\nonumber\\[.2cm]
		&\gg&x^{2\alpha}\sum_{q\leq x}q\sum_{1\leq a\leq q}1\nonumber\\[.2cm]
		&\gg&x^{2\alpha}\sum_{q\leq x}q^2\nonumber\\[.2cm]
		&\gg&	x^{3+2\alpha} .
	\end{eqnarray}
	On the other direction, it yields the upper bound
	\begin{eqnarray}\label{eqA2002.440W}
		\sum_{q\leq x}	q\sum_{1\leq a\leq q}\bigg |\sum_{\substack{n \leq x\\ n\equiv a \bmod q}}1-\frac{1}{q}\sum_{n \leq x}1\bigg|^2
		&\leq&	 Q\left(10Q+2\pi x \right) \sum_{n \leq x}|a_n|^2\\
		&\leq&	 x\left(10x+2\pi x \right) \sum_{n \leq x}|1|^2\nonumber\\
		&\ll&	 x^3\nonumber.
	\end{eqnarray}
	Clearly, the lower bound in \eqref{eqA2002.430W} contradicts the upper bound in \eqref{eqA2002.440W}. Similarly, the other possibilities for the error term
	\begin{equation}\label{eqA2002.445W}
		E_1=O\left(\frac{x}{(\log x)^c} \right)\quad \text{ and }\quad
		E_2=O\left(xe^{-c\sqrt{\log x} }\right),
	\end{equation}
	contradict large sieve inequality. Therefore, the error term is of the form
	\begin{equation}\label{eqA2002.450W}
		E(x)=O\left(\frac{x}{q}e^{-c\sqrt{\log x} }\right)=O\left(\frac{x}{q(\log x)^c} \right)=O\left(\frac{x}{q }\right),
	\end{equation}
	where $ c>0$ is a constant.
\end{proof}

%SSSSSSSSSSSSSSSSSSSSSSSSSSSSSSSSSSSSSSSSSSSSSSSS
%SSSSSSSSSSSSSSSSSSSSSSSSSSSSSSSSSSSSSSSSSSSSSSSS
%SSSSSSSSSSSSSSSSSSSSSSSSSSSSSSSSSSSSSSSSSSSSSSSS
%SSSSSSSSSSSSSSSSSSSSSSSSSSSSSSSSSSSSSSSSSSSSSSSS
\section{\textbf{Lower Bound For The Main Term}} \label{S2099}

\begin{lem} \label{lem2099.100} If $x \geq 1$ is a large number, and $x_1=(\log x)^{c_0}\leq e^{c_1\sqrt{\log x}}$, where $c_0>0$ and $c_1=c_1(c_0)>0$ are constants, then,
	\begin{eqnarray} \label{eq2099.100}
		M(x)&=&\sum_{\substack{1\leq d_1\leq x_1\\1\leq d_2\leq x_1}}\mu(d_1)\log (d_1)\mu(d_2)\log (d_2)\sum_{\substack{n \leq x\\d_1\mid 2n+1,\;d_2\mid 2n+1}}\Lambda(n+1)\nonumber\\
		&\gg& \frac{x\log \log x}{\log \log \log x}\nonumber.
	\end{eqnarray}
\end{lem}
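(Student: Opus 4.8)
The plan is to replace the inner sum by a single prime-counting sum in an arithmetic progression, estimate it by the Siegel--Walfisz form of the prime number theorem, and then recognise the resulting main term as a nonnegative quadratic form which is bounded from below by the diagonalisation used in Lemma \ref{lem4040.250P}.

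Since $2n+1$ is odd, the inner sum is empty unless $d_1$ and $d_2$ are both odd, and the factors $\mu(d_i)\log d_i$ confine the outer double sum to odd squarefree $d_1,d_2\in[3,x_1]$. For such $d_1,d_2$ the two conditions $d_1\mid 2n+1$, $d_2\mid 2n+1$ are jointly equivalent to $[d_1,d_2]\mid 2n+1$, and putting $m=n+1$ this becomes the single congruence $m\equiv\overline 2\pmod{[d_1,d_2]}$, where $\overline 2$ denotes the inverse of $2$ and is coprime to $[d_1,d_2]$. Because $[d_1,d_2]\le x_1^2=(\log x)^{2c_0}$ stays inside the Siegel--Walfisz range, the prime number theorem in arithmetic progressions gives
$$\sum_{\substack{n\le x\\ d_1\mid 2n+1,\ d_2\mid 2n+1}}\Lambda(n+1)\;=\sum_{\substack{m\le x+1\\ m\equiv\overline 2\pmod{[d_1,d_2]}}}\Lambda(m)\;=\;\frac{x+1}{\varphi([d_1,d_2])}+O\!\left(xe^{-b\sqrt{\log x}}\right),$$
with $b=b(c_0)>0$. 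Multiplying by $\mu(d_1)\log d_1\,\mu(d_2)\log d_2$, summing over $d_1,d_2$, and bounding the accumulated error by $xe^{-b\sqrt{\log x}}\bigl(\sum_{d\le x_1}\log d\bigr)^2\ll xe^{-b\sqrt{\log x}}(\log x)^{2c_0}(\log\log x)^2\ll xe^{-(b/2)\sqrt{\log x}}$, one obtains
$$M(x)=(x+1)\,S+O\!\left(xe^{-(b/2)\sqrt{\log x}}\right),\qquad S:=\sum_{\substack{d_1,d_2\le x_1\\ d_1,d_2\ \text{odd, squarefree}}}\frac{\mu(d_1)\mu(d_2)\log d_1\log d_2}{\varphi([d_1,d_2])}.$$

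It then remains to prove $S\gg\log x_1$. Here I would write $\varphi([d_1,d_2])^{-1}=\varphi((d_1,d_2))\varphi(d_1)^{-1}\varphi(d_2)^{-1}$ (a reformulation of Lemma \ref{lem4040.650E}) and $\varphi(n)=\sum_{e\mid n}h(e)$ with $h$ the multiplicative function $h(p)=p-2$, and then switch the order of summation exactly as in the proof of Lemma \ref{lem4040.250P}, arriving at
$$S=\sum_{\substack{e\le x_1\\ e\ \text{odd, squarefree}}}\frac{h(e)}{\varphi(e)^2}\Biggl(\;\sum_{\substack{r\le x_1/e\\ (r,2e)=1}}\frac{\mu(r)\log(er)}{\varphi(r)}\Biggr)^{2}\ \ge\ 0,$$
a sum of nonnegative terms, since $h(e)\ge1$ for odd squarefree $e$. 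Evaluating the inner sum by Lemma \ref{lem4040.850P} applied to the \emph{even} modulus $2e$ --- split $\log(er)=\log e+\log r$ and use parts (i) and (ii) --- yields
$$\sum_{\substack{r\le x_1/e\\ (r,2e)=1}}\frac{\mu(r)\log(er)}{\varphi(r)}=\mathfrak{G}(2e)+O\!\left((\log e+1)\,e^{-c\sqrt{\log(x_1/e)}}\right),\qquad \mathfrak{G}(2e)\ge 2C_2>1,$$
by \eqref{eq4040.295P}. The point to watch is that the sieve variable $r$ must be coprime to $2e$, not merely to $e$, so the relevant constant is the even-argument value $\mathfrak{G}(2e)>1$ and not the vanishing $\mathfrak{G}(e)=0$.

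Finally, I would restrict the outer sum over $e$ to $e\le\sqrt{x_1}$, which only decreases $S$ because every term is nonnegative; on this range $x_1/e\ge\sqrt{x_1}=(\log x)^{c_0/2}$, so the error term above is $O\!\bigl((\log\log x)\,e^{-(c/2)\sqrt{(c_0/2)\log\log x}}\bigr)=o(1)$ uniformly in $e$, whence the squared bracket exceeds $\tfrac12$ once $x$ is large. Since $h(e)/\varphi(e)^2=\tfrac1e\prod_{p\mid e}\bigl(1-(p-1)^{-2}\bigr)\ge C_2/e$ for odd squarefree $e$, this gives
$$S\ \gg\ \sum_{\substack{e\le\sqrt{x_1}\\ e\ \text{odd, squarefree}}}\frac1e\ \gg\ \log x_1=c_0\log\log x,$$
and therefore $M(x)\gg x\log\log x$, which is in fact stronger than the asserted bound $M(x)\gg x\log\log x/\log\log\log x$ since $\log\log\log x\to\infty$. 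The main obstacle in carrying this out is the inner sum: one must simultaneously keep the modulus $[d_1,d_2]$ below $(\log x)^{2c_0}$ so that Siegel--Walfisz applies (which is precisely why $x_1$ is taken to be a fixed power of $\log x$) and track that the governing singular series is $\mathfrak{G}(2e)>1$ rather than $\mathfrak{G}(e)$; the loss of control in the inner sum when $e$ approaches $x_1$ is harmless, being absorbed by discarding that range from a nonnegative quadratic form.
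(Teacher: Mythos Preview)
Your argument follows the same template as the paper's --- Siegel--Walfisz on the inner sum, then the nonnegative quadratic-form diagonalisation of Lemma~\ref{lem4040.250P} on the resulting main term --- and is in fact cleaner: you track the parity constraint (odd $d_i$, hence coprimality to $2e$ and the relevant constant $\mathfrak{G}(2e)>1$) and the truncation $e\le\sqrt{x_1}$ explicitly, and thereby obtain $M(x)\gg x\log\log x$ without the extraneous factor $1/\log\log\log x$ that the paper carries from the slack lower bound in \eqref{eq4040.270P}. The paper packages the main-term estimate as Lemma~\ref{lem2099.200} and the error estimate as Lemma~\ref{lem2059.300}, but the content is the same as what you wrote.
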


\begin{proof}[\textbf{Proof}] Let $x$ be a large number, let $d_1d_2\leq x_1^2=(\log x)^{2c_0}\leq e^{c_1\sqrt{\log x}}$, and let $q=[d_1,d_2]$. Applying the prime number theorem for prime in arithmetic progression, see \cite[Corollary 11.19]{MV2007}, yields	
	\begin{eqnarray} \label{eq2099.110}
		M(x)&=&\sum_{\substack{1\leq d_1\leq x_1\\1\leq d_2\leq x_1}}\mu(d_1)\log (d_1)\mu(d_2)\log (d_2)\sum_{\substack{n \leq x\\d_1\mid 2n+1,\;d_2\mid 2n+1}}\Lambda(n)\\[.2cm]
		&=&\sum_{\substack{1\leq d_1\leq x_1\\1\leq d_2\leq x_1}}\mu(d_1)\log (d_1)\mu(d_2)\log (d_2) \left( \frac{x}{\varphi([d_1,d_2])}+O\left( xe^{-c_1\sqrt{\log x}}\right) \right) \nonumber\\[.2cm]
		&=&x\sum_{\substack{1\leq d_1\leq x_1\\1\leq d_2\leq x_1}} \frac{\mu(d_1)\log (d_1)\mu(d_2)\log (d_2)}{\varphi([d_1,d_2])}\nonumber\\[.2cm]
		&&\hskip 1.5 in +	O\left( xe^{-c_1\sqrt{\log x}} \sum_{\substack{1\leq d_1\leq x_1\\1\leq d_2\leq x_1}}(\log d_1)(\log d_2)\right)\nonumber\\[.2cm]
		&=&		M_{0}(x)+M_{1}(x)\nonumber.
	\end{eqnarray}
	
	The first subsum $M_{0}(x)$ is estimated in Lemma \ref{lem2099.200} and the second subsum $M_{1}(x)$ is estimated in Lemma \ref{lem2059.300}. Summing these estimates yields
	\begin{eqnarray} \label{eq2059.130}
		M(x)&=&	M_{0}(x)+M_{1}(x)\\[.2cm]
		&\gg	& \frac{x\log \log x}{\log \log \log x}+O\left( xe^{-c_2\sqrt{\log x}}\right)\nonumber\\[.2cm]
		&\gg	&  \frac{x\log \log x}{\log \log \log x}\nonumber,
	\end{eqnarray}
	as claimed. \end{proof}

\begin{lem} \label{lem2099.200} Assume that $d_1\mid 2n+1,\;d_2\mid 2n+1$. If $x \geq 1$ is a large number, and $x_1=(\log x)^{c_0}$, where $c_0>0$, then,
	\begin{eqnarray} \label{eq2099.210}
		M_{0}(x)&=&x\sum_{\substack{1\leq d_1\leq x_1\\1\leq d_2\leq x_1}} \frac{\mu(d_1)\log (d_1)\mu(d_2)\log (d_2)}{\varphi([d_1,d_2])}\nonumber\\
		&\gg& \frac{x\log \log x}{\log \log \log x}\nonumber.
	\end{eqnarray}
\end{lem}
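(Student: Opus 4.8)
The plan is to recognize the inner double sum defining $M_0(x)$ as precisely the quantity $B(\cdot)$ analyzed in Lemma \ref{lem4040.250P}, evaluated at the short truncation level $x_1=(\log x)^{c_0}$ in place of $x$, and then to transcribe the lower bound proved there after unwinding a couple of iterated logarithms.

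First I would record that, in the notation of Lemma \ref{lem4040.250P},
$$
M_0(x)=x\sum_{\substack{1\le d_1\le x_1\\ 1\le d_2\le x_1}}\frac{\mu(d_1)\mu(d_2)\log d_1\,\log d_2}{\varphi([d_1,d_2])}=x\,B(x_1),
$$
the only difference being the cosmetic order of the factors in the numerator. Since $x$ is large, $x_1=(\log x)^{c_0}\to\infty$, so Lemma \ref{lem4040.250P} applies with $x_1$ in place of $x$; moreover the sum-of-squares representation \eqref{eq4040.265P} shows $B(x_1)>0$, so the estimate is not vacuous. Hence
$$
B(x_1)\gg\frac{\log x_1}{\log\log x_1}.
$$

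Next I would substitute $\log x_1=c_0\log\log x$ and $\log\log x_1=\log\log\log x+O(1)\sim\log\log\log x$; the constant $c_0$ is harmless, being absorbed into the implied constant. This gives
$$
B(x_1)\gg\frac{c_0\log\log x}{\log\log\log x+O(1)}\gg\frac{\log\log x}{\log\log\log x},
$$
and multiplying by $x$ yields $M_0(x)=x\,B(x_1)\gg x\log\log x/\log\log\log x$, which is the assertion.

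I do not anticipate a genuine obstacle here; the one point deserving a moment's care is that ``large $x$'' in Lemma \ref{lem4040.250P} may be replaced by ``large $x_1$'', i.e.\ that the error terms of the shape $e^{-b\sqrt{\log x_1}}$ internal to its proof remain $o(\log x_1/\log\log x_1)$. Writing $u=\log\log x$ this is the statement $e^{-b\sqrt{c_0u}}=o(u/\log u)$, which is immediate, so the lower bound of Lemma \ref{lem4040.250P} does hold at the scale $x_1$ for all sufficiently large $x$, and the proof is complete.
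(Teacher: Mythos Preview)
Your proposal is correct and follows essentially the same route as the paper: you recognize the double sum as $B(x_1)$ from Lemma~\ref{lem4040.250P}, apply that lemma at the truncation level $x_1=(\log x)^{c_0}$, and then substitute $\log x_1\asymp\log\log x$ and $\log\log x_1\sim\log\log\log x$ to obtain the bound. If anything, you are slightly more explicit than the paper in spelling out the iterated-logarithm substitution and in checking that the internal error terms of Lemma~\ref{lem4040.250P} remain negligible at the scale $x_1$.
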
 
\begin{proof}[\textbf{Proof}] By Lemma \ref{lem4040.250P} the double finite sum
	\begin{eqnarray} \label{eq2099.240}
		F(x)&=&\sum_{\substack{1\leq d_1\leq x_1\\1\leq d_2\leq x_1}} \frac{\mu(d_1)\log (d_1)\mu(d_2)\log (d_2)}{\varphi([d_1,d_2])}\\[.2cm]
		&\gg&\frac{\log \log x}{\log \log \log x}\nonumber.
	\end{eqnarray}
	Thus, the product $xF(x)\gg (x\log \log x)/(\log \log \log x) $ verifies the claim. 
\end{proof}

\begin{lem} \label{lem2059.300} If $x \geq 1$ is a large number, and $x_1=(\log x)^{c_0}\leq e^{c_1\sqrt{\log x}}$, where $c_0>0$ and $c_1=c_1(c_0)>0$ are constants, then,
	\begin{eqnarray} \label{eq2059.300}
		M_1(x)&=&O\left( xe^{-c_1\sqrt{\log x}} \sum_{d_1,\;d_2\leq x_1}\log (d_1)\log (d_2)\right)\nonumber\\
		&=& O\left( xe^{-c_2\sqrt{\log x}}\right)\nonumber,
	\end{eqnarray}
	where $c_1, c_2>0$ are constants.
\end{lem}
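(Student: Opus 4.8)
The plan is to bound the double sum over $d_1,d_2$ by a fixed power of $\log x$ and then absorb it into the exponential saving $e^{-c_1\sqrt{\log x}}$ already present. First I would factor the sum and estimate it trivially:
\[
\sum_{d_1,\,d_2\leq x_1}\log(d_1)\log(d_2)=\left(\sum_{d\leq x_1}\log d\right)^2\ll\left(x_1\log x_1\right)^2,
\]
where the inner bound $\sum_{d\leq x_1}\log d\leq x_1\log x_1$ is immediate (or one may use partial summation against $\sum_{d\leq t}1=[t]$, giving $x_1\log x_1-x_1+O(\log x_1)$). No arithmetic input is needed here; the point is only that the sum is polynomial in $x_1$.

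Next I would insert $x_1=(\log x)^{c_0}$, so that $\log x_1=c_0\log\log x$ and hence
\[
\left(x_1\log x_1\right)^2\ll (\log x)^{2c_0}(\log\log x)^2.
\]
Combining this with the definition of $M_1(x)$ from Lemma \ref{lem2099.100} yields
\[
M_1(x)\ll xe^{-c_1\sqrt{\log x}}(\log x)^{2c_0}(\log\log x)^2 .
\]
I would also record that the hypothesis $x_1\leq e^{c_1\sqrt{\log x}}$ is automatic in this range, since $(\log x)^{c_0}=e^{c_0\log\log x}\leq e^{c_1\sqrt{\log x}}$ for all large $x$, so there is no compatibility issue with the upstream application of the prime number theorem in arithmetic progressions that produced the factor $e^{-c_1\sqrt{\log x}}$.

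Finally I would observe that any fixed power of $\log x$ times a fixed power of $\log\log x$ is $O\!\left(e^{(c_1-c_2)\sqrt{\log x}}\right)$ for every constant $0<c_2<c_1$, once $x$ is large enough, because $\sqrt{\log x}$ grows faster than $\log\log x$. Choosing such a $c_2$ swallows the polynomial losses and gives
\[
M_1(x)\ll xe^{-c_2\sqrt{\log x}},
\]
as claimed. There is essentially no serious obstacle in this lemma; the only step needing (minor) care is this last one, namely fixing $c_2<c_1$ so that the exponential saving dominates the accumulated factors $(\log x)^{2c_0}(\log\log x)^2$, which is routine since $\sqrt{\log x}/\log\log x\to\infty$.
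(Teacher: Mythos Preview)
Your argument is correct and mirrors the paper's proof essentially step for step: factor the double sum as $\bigl(\sum_{d\le x_1}\log d\bigr)^2\ll (x_1\log x_1)^2$, substitute $x_1=(\log x)^{c_0}$, and absorb the resulting polylog factor into the exponential by shrinking $c_1$ to some $c_2<c_1$. The only cosmetic difference is that the paper crudely bounds $(\log\log x)^2$ by $\log x$ to write the intermediate estimate as $(\log x)^{2c_0+1}$, whereas you keep the sharper $(\log x)^{2c_0}(\log\log x)^2$; either way the conclusion is immediate.
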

\begin{proof}[\textbf{Proof}] An estimate of the double finite sum yields
	\begin{eqnarray} \label{eq2059.310}
		M_1(x)&=&O\left( xe^{-c_1\sqrt{\log x}} \sum_{d_1,\;d_2\leq x_1}(\log d_1)(\log d_2)\right)\\[.2cm]
		&=&O\left( xe^{-c_1\sqrt{\log x}}\left((\log x_1)^2\cdot (x_1)^2 \right) \right) \nonumber\\[.2cm]
		&=&O\left( xe^{-c_1\sqrt{\log x}}(\log x)^{2c_0+1} \right) \nonumber\\[.2cm]
		&=&O\left( xe^{-c_2\sqrt{\log x}}\right)\nonumber,
	\end{eqnarray}	
	where $c_1, c_2>0$ are constants.
\end{proof}

%SSSSSSSSSSSSSSSSSSSSSSSSSSSSSSSSSSSSSSSSSSSSSSSS
%SSSSSSSSSSSSSSSSSSSSSSSSSSSSSSSSSSSSSSSSSSSSSSSS
%SSSSSSSSSSSSSSSSSSSSSSSSSSSSSSSSSSSSSSSSSSSSSSSS
%SSSSSSSSSSSSSSSSSSSSSSSSSSSSSSSSSSSSSSSSSSSSSSSS
\section{\textbf{Upper Bound For The Error Term}} \label{S2059}
The error term $E(x)$ arising in the proof of Theorem \ref{thm2020.700} consists of a sum of three finite sums
\begin{eqnarray}\label{eq2059.345}
	E(x)&=&\sum_{\substack{1\leq d_1\leq 2x+1\\ 1\leq d_2\leq 2x+1\\d_1>x_1\text{ or }d_2>x_1}}\mu(d_1)\log (d_1)\mu(d_2)\log (d_2)\sum_{\substack{n \leq x\\ d_1\mid 2n+1,\;d_2\mid 2n+1}}\Lambda(n)\nonumber\\[.2cm]
	&=&\sum_{\substack{x_1<d_1\leq 2x+1\\1\leq d_2\leq 2x+1}}+\sum_{\substack{1\leq d_1\leq 2x+1\\x_1< d_2\leq 2x+1}}+\sum_{\substack{x_1<d_1\leq 2x+1\\x_1< d_2\leq 2x+1}}\nonumber\\[.2cm]
	&=&E_1(x)+	E_2(x)+E_3(x).
\end{eqnarray}
An effective upper bound is computed in the next result.
\begin{lem} \label{lem2059.350} Assume that $d_1\mid 2n+1,\;d_2\mid 2n+1$. If $x \geq 1$ is a large number, and $x_1=(\log x)^{c_0}<xe^{c_1\sqrt{\log x}}$, with $c_0>0$, then,
	\begin{equation} \label{eq2059.350}
		E(x)= O\left( xe^{-c\sqrt{\log x}}\right)\nonumber,
	\end{equation}
	where $c_1,c>0$ are constants.	
\end{lem}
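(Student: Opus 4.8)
The plan is to estimate the three subsums $E_1(x)$, $E_2(x)$, $E_3(x)$ separately, using the symmetry $d_1\leftrightarrow d_2$ to reduce the essential work to $E_1(x)$, and to split each of them according to the size of the modulus $q=[d_1,d_2]$. First I would interchange the order of summation: since $d_1\mid 2n+1$ and $d_2\mid 2n+1$ together amount to $[d_1,d_2]\mid 2n+1$, which forces $q=[d_1,d_2]$ to be odd and pins $n$ to a single residue class $a=a(d_1,d_2)$ with $\gcd(a,q)=1$, each inner sum becomes $\psi(x;q,a)=\sum_{n\le x,\ n\equiv a\bmod q}\Lambda(n)$, so that
$$E_1(x)=\sum_{\substack{x_1<d_1\le 2x+1\\ 1\le d_2\le 2x+1}}\mu(d_1)\mu(d_2)(\log d_1)(\log d_2)\,\psi(x;[d_1,d_2],a),$$
and likewise for $E_2(x)$ and $E_3(x)$.

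Next I would fix a threshold $x_2=(\log x)^{A}$ with $A$ large (in terms of $c_0$) and split $E_1(x)=E_1'(x)+E_1''(x)$ according to $[d_1,d_2]\le x_2$ or $[d_1,d_2]>x_2$. On the first range the Siegel--Walfisz form of the prime number theorem in arithmetic progressions, \cite[Corollary 11.19]{MV2007}, gives $\psi(x;q,a)=x/\varphi(q)+O\!\left(xe^{-c_1\sqrt{\log x}}\right)$, so $E_1'(x)$ equals $x$ times the tail sum $\sum_{x_1<d_1,\ [d_1,d_2]\le x_2}\mu(d_1)\mu(d_2)(\log d_1)(\log d_2)/\varphi([d_1,d_2])$ plus an error which, exactly as in the proof of Lemma \ref{lem2059.300}, is $O(xe^{-c_1\sqrt{\log x}})$ times a bounded power of $\log x$, hence $O(xe^{-c_2\sqrt{\log x}})$. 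For the tail I would apply Lemma \ref{lem4040.650E} to rewrite $1/\varphi([d_1,d_2])$ as $\varphi(d_1d_2)^{-1}\sum_{e\mid(d_1,d_2)}\varphi(e)$, then substitute $d_1=er$, $d_2=es$ with $e,r,s$ squarefree and pairwise coprime and factor the sum as $\sum_{e}\frac{\mu^2(e)}{e}\big(\sum_{r:\,er>x_1}\tfrac{\mu(r)\log(er)}{\varphi(r)}\big)\big(\sum_{s}\tfrac{\mu(s)\log(es)}{\varphi(s)}\big)$, in the manner of \eqref{eq4040.265P}. By Lemma \ref{lem4040.800P} the $s$-factor is $\mathfrak{G}(e)+O(e^{-b\sqrt{\log x}})$ and the $r$-factor, being the difference of two sums of that type, is $O(e^{-b'\sqrt{\log x_1}})$ uniformly in $e$; summing against $\mu^2(e)\mathfrak{G}(e)/e$ and multiplying through by $x$ then bounds $E_1'(x)$ by $O(xe^{-c_3\sqrt{\log x}})$.

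On the complementary range $[d_1,d_2]>x_2$ I would invoke the large sieve of Theorem \ref{thmA2002.200W}. Grouping the pairs by their least common multiple $q$ and writing $E_1''(x)=\sum_{q>x_2}b_q\,\psi(x;q,a_q)$ with $b_q=\sum_{[d_1,d_2]=q,\ d_1>x_1}\mu(d_1)\mu(d_2)(\log d_1)(\log d_2)$ (so $|b_q|\ll q^{\varepsilon}$), one subtracts the averaged main term and estimates the remaining sum by Theorem \ref{thmA2002.200W} with $a_n=\Lambda(n)$, using $\sum_{n\le x}\Lambda(n)^2\ll x\log x$ and Cauchy--Schwarz over $q$; the averaged main term is in turn handled through Lemma \ref{lemA2002.400W} after partial summation against $\Lambda$. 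This should give $E_1''(x)=O(xe^{-c_4\sqrt{\log x}})$. The argument for $E_2(x)$ is identical by symmetry, and $E_3(x)$ — whose modulus is again odd and both of whose variables already exceed $x_1$ — is covered by the same steps. Summing, $E(x)=E_1(x)+E_2(x)+E_3(x)=O(xe^{-c\sqrt{\log x}})$.

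The step I expect to be the real obstacle is the honest control of $\psi(x;[d_1,d_2],a)$ uniformly over the entire window $x_1<[d_1,d_2]\le 2x+1$: Siegel--Walfisz is available only for $[d_1,d_2]\le(\log x)^{A}$, so the moduli ranging from $(\log x)^{A}$ all the way to $2x+1$ must be absorbed by a Bombieri--Vinogradov / large-sieve input, and the delicate point is to check that the Möbius-twisted coefficients $\mu(d_1)\mu(d_2)(\log d_1)(\log d_2)$ are regular enough — in particular that their $\ell^2$-mass after grouping by $[d_1,d_2]$ is small enough — for Theorem \ref{thmA2002.200W} to return the saving $e^{-c\sqrt{\log x}}$ rather than merely a power of $\log x$. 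It is exactly the cancellation in $\mu$, already used in \eqref{eq4040.265P} and Lemma \ref{lem4040.250P}, that one must keep track of here.
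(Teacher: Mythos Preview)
Your approach differs from the paper's and has two genuine gaps. The paper never tries to control $\psi(x;[d_1,d_2],a)$ for large moduli: it first expands the remaining weight as $\Lambda(n)=-\sum_{d_3\mid n}\mu(d_3)\log d_3$, so that the innermost sum becomes the elementary integer count $\sum_{n\le x,\ [d_1,d_2]\mid 2n+1,\ d_3\mid n}1$, which is compared with $x/[d_1,d_2,d_3]$ through Lemma~\ref{lemA2002.400W}. The saving $e^{-c\sqrt{\log x}}$ is then extracted not from primes in progressions but from the full-range $d_3$-sum (via Lemma~\ref{lem4040.600}) combined with the crude bound of Lemma~\ref{lem4040.800L}; see Lemmas~\ref{lem2059.400}, \ref{lem2059.450B}, \ref{lem2059.450}. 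In other words, the paper trades the hard arithmetic of $\psi(x;q,a)$ for an extra M\"obius variable ranging over the \emph{whole} interval $[1,x]$, and it is that long range that supplies the factor $e^{-c\sqrt{\log x}}$.

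In your scheme the tail-main-term bound for $E_1'(x)$ is too optimistic: by Lemma~\ref{lem4040.850P} the $r$-tail $\sum_{r>x_1/e}\mu(r)\log(er)/\varphi(r)$ is only $O\!\big(e^{-b\sqrt{\log(x_1/e)}}\big)$, and since $x_1=(\log x)^{c_0}$ this is $O\!\big(e^{-b'\sqrt{\log\log x}}\big)$, not $O(e^{-c\sqrt{\log x}})$; after multiplying by $x$ and summing over $e$ you obtain at best $x\,e^{-b'\sqrt{\log\log x}}$, far short of the stated bound. For $E_1''(x)$ the obstruction is structural rather than technical: the moduli $[d_1,d_2]$ run all the way to $2x+1$, well beyond the Bombieri--Vinogradov level $x^{1/2-\varepsilon}$, and Theorem~\ref{thmA2002.200W} is an $\ell^2$ inequality that on average delivers at most a power-of-$\log$ saving, not $e^{-c\sqrt{\log x}}$; no amount of regularity in the coefficients $\mu(d_1)\mu(d_2)\log d_1\log d_2$ changes this. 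You rightly flag the step as the crux, but it cannot be patched within your framework --- it is precisely why the paper replaces the prime count by an integer count before attacking the tail.
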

\begin{proof}[\textbf{Proof}]Except for minor changes, the analysis of the upper bounds for finite sums $E_1(x)$, $E_2(x)$ and $E_3(x)$  are similar. The first one is computed in Lemma \ref{lem2059.400} to demonstrate the method. Summing these estimates yields
	\begin{eqnarray}\label{eq2059.360}
		E(x)
		&=&E_1(x)+	E_2(x)+E_3(x)\\
		&=& O\left( xe^{-c\sqrt{\log x}}\right)\nonumber.
	\end{eqnarray}
	This completes the proof.
\end{proof}
\begin{lem} \label{lem2059.400} Assume that $d_1\mid 2n+1,\;d_2\mid 2n+1$. If $x \geq 1$ is a large number, and $x_1=(\log x)^{c_0}<xe^{c_1\sqrt{\log x}}$, with $c_0>0$, then,
	\begin{eqnarray} \label{eq2059.400}
		E_1(x)&=&\sum_{\substack{x_1<d_1\leq 2x+1\\1\leq d_2\leq 2x+1}}\mu(d_1)\log (d_1)\mu(d_2)\log (d_2)\sum_{\substack{n \leq x\\ d_1\mid 2n+1,\;d_2\mid 2n+1}}\Lambda(n)\nonumber\\
		&=& O\left( xe^{-c_3\sqrt{\log x}}\right)\nonumber,
	\end{eqnarray}
	where $c_1,c_3>0$ are constants.	
\end{lem}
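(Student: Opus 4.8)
The natural plan is the standard modulus-by-modulus reduction: rewrite the inner sum as a weighted prime count in a single residue class, split the range of moduli into a part controlled by an equidistribution theorem and a large part handled by a trivial bound, and estimate each. First I would rewrite the inner sum. The conditions $d_1\mid 2n+1$ and $d_2\mid 2n+1$ together mean $q\mid 2n+1$ with $q=[d_1,d_2]$; since $2n+1$ is odd this is vacuous unless $q$ is odd (so one may assume $d_1,d_2$ odd and squarefree), and then $2$ is invertible modulo $q$, so $2n+1\equiv0\bmod q$ confines $n$ to one residue class $a=a(q)$, automatically coprime to $q$. Hence the inner sum equals $\psi(x;q,a(q)):=\sum_{n\le x,\ n\equiv a(q)\bmod q}\Lambda(n)$. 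Collecting the outer variables by $q$, and using that the number of ordered squarefree pairs with $[d_1,d_2]=q$ is at most $3^{\omega(q)}$, that $|\mu(d_i)\log d_i|\le\log q\ll\log x$, that $d_1\mid q$ with $d_1>x_1$ forces $q>x_1$, and that $\psi(x;q,a(q))=0$ once $q>2x+1$, one obtains
\[
|E_1(x)|\ \ll\ (\log x)^{2}\sum_{\substack{x_1<q\le 2x+1\\ \mu^{2}(q)=1,\ 2\nmid q}}3^{\omega(q)}\,\psi(x;q,a(q)).
\]

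Next I would split the $q$-sum at $q=x^{1/2}(\log x)^{-B}$. For $x_1<q\le x^{1/2}(\log x)^{-B}$ one restores the M\"obius weights (grouping by $q$ alone throws the cancellation away) and applies the Bombieri--Vinogradov theorem in a divisor-weighted form --- obtained from the large-sieve input of Theorem~\ref{thmA2002.200W} via a Vaughan-type decomposition of $\Lambda$ and a Cauchy--Schwarz step --- to replace $\psi(x;q,a(q))$ by $x/\varphi(q)$ with accumulated error $O\!\left(x(\log x)^{-A}\right)$, leaving the main term $x\sum_{d_1,d_2}\mu(d_1)\mu(d_2)\log d_1\log d_2/\varphi([d_1,d_2])$ over the relevant ranges, which one would hope to estimate via Lemma~\ref{lem4040.250P}. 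For $x^{1/2}(\log x)^{-B}<q\le2x+1$ each progression meets $\{1,\dots,x\}$ in $O(x/q+1)$ integers, so $\psi(x;q,a(q))\ll(x/q+1)\log x$; feeding back that $\Lambda(n)\ne0$ forces $2n+1=2p^{k}+1$ thins the surviving moduli and leaves a sum of the shape $(\log x)^{2}\sum_{p^{k}\le x}3^{\omega(2p^{k}+1)}$.

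The step I expect to be the genuine obstacle is the control of the large moduli, and the difficulty here is structural. Swapping the order of summation the other way, for $n\le x$ one has $\sum_{d_2\mid 2n+1}\mu(d_2)\log d_2=-\Lambda(2n+1)$ and $\sum_{x_1<d_1\mid 2n+1}\mu(d_1)\log d_1=-\Lambda(2n+1)-T(2n+1)$ with $T(m):=\sum_{d\mid m,\ d\le x_1}\mu(d)\log d$; on the support of $\Lambda(n)$ the product $\Lambda(n)\Lambda(2n+1)T(2n+1)$ is carried by the sparse set of $n$ with $2n+1$ a prime power $p^{k}$, $p\le x_1$, so its total contribution is $O\!\big((\log x)^{c_0+3}\big)$, whence
\[
E_1(x)=\sum_{n\le x}\Lambda(n)\Lambda(2n+1)^{2}+O\!\big((\log x)^{c_0+3}\big),
\]
an error far smaller than any error term in the theorem. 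Thus the asserted bound is equivalent to $\sum_{n\le x}\Lambda(n)\Lambda(2n+1)^{2}=O\!\left(xe^{-c_3\sqrt{\log x}}\right)$, which forces Germain-type prime pairs to be vanishingly rare and would contradict Conjecture~\ref{conj3030.900}; since no unconditional lower bound of the conjectured order for this sum is known (it is essentially the Germain prime problem itself), I do not see how to obtain it. The same obstruction reappears inside the plan above: the double sum $\sum_{d_1,d_2}\mu(d_1)\mu(d_2)\log d_1\log d_2/\varphi([d_1,d_2])$ is, by Lemma~\ref{lem4040.250P}, unbounded (of order $\log x$), so there is no reason for the surviving main term to be as small as claimed, and the sum $\sum_{q}3^{\omega(q)}\psi(x;q,a(q))$ over large $q$ admits no cancellation once the absolute values are in place. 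A valid proof would therefore have to refute Conjecture~\ref{conj3030.900}, or expose a cancellation among the weights $\mu(d_1)\mu(d_2)$ that the displayed expression does not manifest.
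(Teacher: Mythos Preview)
Your diagnosis is correct, and it exposes a genuine inconsistency in the paper rather than a gap in your own reasoning. The identity you obtain by reversing the order of summation,
\[
E_1(x)\;=\;\sum_{n\le x}\Lambda(n)\,\Lambda(2n+1)^{2}\;+\;O\bigl((\log x)^{c_0+O(1)}\bigr)\;=\;\psi_0(x)\;+\;O\bigl((\log x)^{c_0+O(1)}\bigr),
\]
is valid (on the support of $\Lambda(2n+1)$ the truncated divisor sum $T(2n+1)$ vanishes unless $2n+1=p^{k}$ with $p\le x_1$, and those $n$ contribute only polylogarithmically). Hence the conclusion $E_1(x)=O\bigl(xe^{-c_3\sqrt{\log x}}\bigr)$ is equivalent to $\psi_0(x)=O\bigl(xe^{-c_3\sqrt{\log x}}\bigr)$, which directly contradicts Theorem~\ref{thm2020.700}. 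The lemma and the theorem cannot both hold.

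The paper's argument proceeds differently and does not notice this. It expands $\Lambda(n)=-\sum_{d_3\mid n}\mu(d_3)\log d_3$, producing a triple sum over $d_1,d_2,d_3$ with inner count $N(d_1,d_2,d_3)=\#\{n\le x:\ d_1,d_2\mid 2n+1,\ d_3\mid n\}$, and writes $N=x/[d_1,d_2,d_3]+\bigl(N-x/[d_1,d_2,d_3]\bigr)$ to split $E_1=T_0+T_1$. Two steps fail. First, in both $T_0$ and $T_1$ the paper asserts $[d_1,d_2,d_3]=[d_1,d_2]\,d_3$ ``since $\gcd(d_1d_2,d_3)=1$''; but once $n$ has been summed out this coprimality is no longer a constraint on the outer variables, yet the factorization is applied to \emph{all} triples in range (and when $\gcd(d_1d_2,d_3)>1$ one has $N=0$ while $x/[d_1,d_2,d_3]\ne 0$, so those terms do not even cancel between $T_0$ and $T_1$). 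Second, and decisively, the bound on $T_1$ rests on Lemma~\ref{lemA2002.400W}, which asserts
\[
\Bigl|\,\#\{n\le x:\ n\equiv a \bmod q\}-\frac{x}{q}\,\Bigr|\;=\;O\!\left(\frac{x}{q}\,e^{-c\sqrt{\log x}}\right)
\]
uniformly for $q\le x$. This is false: the left side is bounded below by a quantity of order $1$ for generic $q$ near $x$, while the right side is $o(1)$ there. (The ``proof'' of Lemma~\ref{lemA2002.400W} hypothesises an error term of fixed sign and uniform size and reads a contradiction from the large sieve upper bound; but the large sieve does not force any individual error to be small, so no such conclusion follows.) With the honest bound $|N-x/[d_1,d_2,d_3]|=O(1)$ the estimate for $T_1$ degenerates completely. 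Your obstruction is therefore the real one: Lemma~\ref{lem2059.400} is incompatible with Theorem~\ref{thm2020.700}, and the paper's proof survives only by invoking the invalid Lemma~\ref{lemA2002.400W}.
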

\begin{proof}[\textbf{Proof}] First replace $\Lambda(n)=-\sum_{d\mid n}\mu(d)\log d$ in the inner sum.
	\begin{eqnarray} \label{eq2059.410}
		E_1(x)&=&-\sum_{\substack{x_1<d_1\leq 2x+1\\1\leq d_2\leq 2x+1}}\mu(d_1)\log (d_1)\mu(d_2)\log (d_2)\\
		&&\hskip 1in\times\sum_{\substack{n \leq x\\ d_1\mid 2n+1,\;d_2\mid 2n+1}}\sum_{d_3\mid n}\mu(d_3)\log(d_3)\nonumber\\
		&=&	-\sum_{\substack{x_1<d_1\leq 2x+1\\1\leq d_2\leq 2x+1\\1\leq d_3\leq x}}\mu(d_1)\log (d_1)\mu(d_2)\log (d_2)\mu(d_3)\log (d_3)\nonumber\\
		&&\hskip 1.97 in\times\sum_{\substack{n \leq x\\ d_1\mid 2n+1,\;d_2\mid 2n+1\\d_3\mid n}}1\nonumber.
	\end{eqnarray}	
	Next, rearrange the last finite sum in the equivalent form
	\begin{eqnarray} \label{eq2059.420}
		E_1(x)
		&=&	-\sum_{\substack{x_1<d_1\leq 2x+1\\1\leq d_2\leq 2x+1\\1\leq d_3\leq x}}\mu(d_1)\log (d_1)\mu(d_2)\log (d_2)\mu(d_3)\log (d_3) \\
		&&\hskip 1.25 in \times\left(\sum_{\substack{n \leq x\\ d_1\mid 2n+1,\;d_2\mid 2n+1\\d_3\mid n}}1-\frac{x}{[d_1,d_2,d_3]}+\frac{x}{[d_1,d_2,d_3]}\right) \nonumber\\[.4cm]
		&=&-	x\sum_{\substack{x_1<d_1\leq 2x+1\\1\leq d_2\leq 2x+1\\1\leq d_3\leq x}}\frac{\mu(d_1)\log (d_1)\mu(d_2)\log (d_2)\mu(d_3)\log (d_3)}{[d_1,d_2,d_3]}\nonumber\\[.4cm]
		&&	-\sum_{\substack{x_1<d_1\leq 2x+1\\1\leq d_2\leq 2x+1\\1\leq d_3\leq x}}\mu(d_1)\log (d_1)\mu(d_2)\log (d_2)\mu(d_3)\log (d_3)\left(\sum_{\substack{n \leq x\\ d_1\mid 2n+1,\;d_2\mid 2n+1\\d_3\mid n}}1-\frac{x}{[d_1,d_2,d_3]}\right) \nonumber\\[.4cm]
		&=& T_{0}(x)+T_{1}(x)\nonumber.
	\end{eqnarray}

	The subsum $T_{0}(x)$ is estimated in Lemma \ref{lem2059.450B} and	subsum $T_{1}(x)$ is estimated in Lemma \ref{lem2059.450}. Summing these estimates completes the proof.	 
\end{proof}

%WWWWWWWWWWWWWWWWWWWWWWWWWWWWWWWWWWWWWW
%WWWWWWWWWWWWWWWWWWWWWWWWWWWWWWWWWWWWWW
\begin{lem} \label{lem2059.450B} Assume that $d_1\mid 2n+1,\;d_2\mid 2n+1$, and $d_3\mid n$. If $x \geq 1$ is a large number, and $x_1=(\log x)^{c_0}<xe^{c_1\sqrt{\log x}}$, with $c_0>0$, then,
	\begin{eqnarray} \label{eq2059.450B}
		T_0(x)&=&-x\sum_{\substack{x_1<d_1\leq 2x+1\\1\leq d_2\leq 2x+1\\1\leq d_3\leq x}}\frac{\mu(d_1)\log (d_1)\mu(d_2)\log (d_2)\mu(d_3)\log d_3}{[d_1,d_2,d_3]}\nonumber\\
		&=& O\left( xe^{-c_3\sqrt{\log x}}\right)\nonumber,
	\end{eqnarray}
	where $c_1,c_3>0$ are constants.	
\end{lem}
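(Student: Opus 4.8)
The plan is to clear the least common multiple from the denominator by repeated use of Lemma~\ref{lem4040.625E}, to push the variable $d_1$ (the one carrying the truncation $d_1>x_1$) into an inner Mobius-twisted sum, and then to extract cancellation from that inner sum through Lemma~\ref{lem4040.600} and Lemma~\ref{lem4040.850P}, the surviving sum over $d_2,d_3$ being controlled by the method of Lemma~\ref{lem4040.800L}. Since $d_2,d_3$ are squarefree, so is $q:=[d_2,d_3]$, and Lemma~\ref{lem4040.625E} with $m=d_1$ and $n=q$ gives $[d_1,d_2,d_3]^{-1}=[d_1,q]^{-1}=(d_1q)^{-1}\sum_{a\mid (d_1,q)}\varphi(a)$. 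Substituting this and interchanging the summations over $d_1$ and $a$ recasts the sum as
\[
T_0(x)=-x\sum_{\substack{1\le d_2\le 2x+1\\ 1\le d_3\le x}}\frac{\mu(d_2)\mu(d_3)\log d_2\,\log d_3}{[d_2,d_3]}\ \sum_{a\mid[d_2,d_3]}\varphi(a)\ \sum_{\substack{x_1<d_1\le 2x+1\\ a\mid d_1}}\frac{\mu(d_1)\log d_1}{d_1}.
\]

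Next I would put $d_1=ad_1'$ in the innermost sum; squarefreeness forces $\gcd(a,d_1')=1$, so $\mu(d_1)=\mu(a)\mu(d_1')$ and $\log d_1=\log a+\log d_1'$, whence that sum equals $\frac{\mu(a)}{a}\big((\log a)A_0(a)+A_1(a)\big)$ with
\[
A_0(a)=\sum_{\substack{x_1/a<d_1'\le (2x+1)/a\\ (d_1',a)=1}}\frac{\mu(d_1')}{d_1'},\qquad
A_1(a)=\sum_{\substack{x_1/a<d_1'\le (2x+1)/a\\ (d_1',a)=1}}\frac{\mu(d_1')\log d_1'}{d_1'}.
\]
Partial summation from the classical bound $M(t)=\sum_{n\le t}\mu(n)\ll t\,e^{-c\sqrt{\log t}}$, together with Lemma~\ref{lem4040.600} for the logarithmic weight and with the coprimality to $a$ handled as in Lemma~\ref{lem4040.850P}, then gives, say for $a\le\sqrt{x}$,
\[
A_0(a),\ A_1(a)\ \ll\ (\log x)\,e^{-c_4\sqrt{\log x}},
\]
whereas for $\sqrt{x}<a\le 2x+1$ one retains only the trivial $A_0(a),A_1(a)\ll\log x$ and exploits the scarcity of divisors $a\mid[d_2,d_3]$ of that size.

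Finally I would reinsert these estimates and sum the remaining variables $d_2$, $d_3$ and $a\mid[d_2,d_3]$ by the device already used in Lemma~\ref{lem4040.800L}: writing $1/[d_2,d_3]=(d_2d_3)^{-1}\sum_{e\mid(d_2,d_3)}\varphi(e)$, changing variables $d_2=er$, $d_3=es$ with $\gcd(r,s)=1$, and invoking $\sum_{d\le y}\varphi(d)d^{-2}\ll\log y$ and $\sum_{d\le y}(\log d)^k d^{-1}\ll(\log y)^{k+1}$. This costs at most a fixed power of $\log x$, so that
\[
|T_0(x)|\ \ll\ x\,(\log x)^{O(1)}\,e^{-c_4\sqrt{\log x}}\ \ll\ x\,e^{-c_3\sqrt{\log x}},
\]
which is the assertion.

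The step I expect to be the hardest is the uniform treatment of $A_0(a)$ and $A_1(a)$ when the auxiliary divisor $a=\gcd(d_1,[d_2,d_3])$ is large: there the interval $x_1/a<d_1'\le (2x+1)/a$ is short, the Mobius sums need not cancel, and the truncation $d_1>x_1$ inherited from $E_1(x)$ (which is precisely what keeps the non-negligible head of $\sum_{d\le (2x+1)/a}\mu(d)\log d/d$ out of $A_0(a),A_1(a)$) by itself produces only a saving measured by $\log x_1=c_0\log\log x$ rather than by $\log x$. Bridging that gap, by trading the defect against the smallness of $1/[d_2,d_3]$ (for instance through the identity $\sum_{a\mid m}\mu(a)\log a=-\Lambda(m)$, which collapses the $a$-summation) while keeping every bound uniform in $d_2$ and $d_3$, is the delicate heart of the argument.
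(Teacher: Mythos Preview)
You have overlooked the hypothesis stated in the lemma: the conditions $d_1\mid 2n+1$, $d_2\mid 2n+1$, $d_3\mid n$ force $\gcd(d_1d_2,d_3)=1$, since any common prime would divide both $n$ and $2n+1$. Consequently $[d_1,d_2,d_3]=[d_1,d_2]\cdot d_3$, and the triple sum factors outright:
\[
T_0(x)\;=\;-x\Bigg(\sum_{\substack{x_1<d_1\le 2x+1\\1\le d_2\le 2x+1}}\frac{\mu(d_1)\mu(d_2)\log d_1\log d_2}{[d_1,d_2]}\Bigg)\Bigg(\sum_{1\le d_3\le x}\frac{\mu(d_3)\log d_3}{d_3}\Bigg).
\]
This is exactly what the paper does: Lemma~\ref{lem4040.600} (after partial summation) is applied to the \emph{complete} $d_3$-sum to extract a factor $O(e^{-c\sqrt{\log x}})$, and Lemma~\ref{lem4040.800L} bounds the remaining $(d_1,d_2)$-sum by $O((\log x)^5)$. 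The truncation $d_1>x_1$ plays no role whatsoever; all the cancellation comes from the full range $d_3\le x$.

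Your plan instead tries to squeeze the saving out of the truncated $d_1$-sum, and the difficulty you flag in your last paragraph is not merely ``delicate'' but fatal to that route. Writing $A_1(a)$ as the difference of two partial sums of $\sum_{d'\le y,\,(d',a)=1}\mu(d')\log d'/d'$, the error at the lower endpoint $y=x_1/a$ is of size $e^{-c\sqrt{\log(x_1/a)}}$; since $x_1=(\log x)^{c_0}$, this is at best $e^{-c'\sqrt{\log\log x}}$, not $e^{-c\sqrt{\log x}}$. No rearrangement of the $a$- or $(d_2,d_3)$-sums, and no appeal to $\sum_{a\mid m}\mu(a)\log a=-\Lambda(m)$, can manufacture a saving of $e^{-c\sqrt{\log x}}$ from a M\"{o}bius sum whose effective length is a fixed power of $\log x$. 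The fix is precisely the factorisation above: take the cancellation from $d_3$, where the range is full, and estimate the $(d_1,d_2)$-part trivially.
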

\begin{proof}[\textbf{Proof}]The hypothesis $d_1\mid 2n+1,\;d_2\mid 2n+1$, and $d_3\mid n$ implies that $[d_1,d_2,d_3]=[d_1,d_2]d_3$ since $\gcd(d_1d_2,d_3)=1$. Thus, the finite sum can be factored as
	\begin{eqnarray} \label{eq2059.460B}
		T_{0}(x)
		&=&-	x\sum_{\substack{x_1<d_1\leq 2x+1\\1\leq d_2\leq 2x+1\\1\leq d_3\leq x}}\frac{\mu(d_1)\log (d_1)\mu(d_2)\log (d_2)\mu(d_3)\log (d_3)}{[d_1,d_2,d_3]}\\[.2cm]
		&=&x\sum_{\substack{x_1<d_1\leq 2x+1\\1\leq d_2\leq 2x+1}}\frac{\mu(d_1)\log (d_1)\mu(d_2)\log (d_2)}{[d_1,d_2]}\sum_{1\leq d_3\leq x}\frac{\mu(d_3)\log (d_3)}{d_3} \nonumber.
	\end{eqnarray}	
	Applying Lemma \ref{lem4040.600} to the inner sum in  \eqref{eq2059.460B} and Lemma \ref{lem4040.800L} to the middle sum, yield
	\begin{eqnarray}\label{eq2059.470B}
		T_{0}(x)
		&=& O\bigg( x\sum_{\substack{x_1<d_1\leq 2x+1\\1\leq d_2\leq 2x+1}}\frac{\log (d_1)\log (d_2)}{[d_1,d_2]}\left( e^{-c_2\sqrt{\log x}}\right) \bigg)\\
		&=& O\left( x(\log x)^5\left( e^{-c_2\sqrt{\log x}}\right) \right)\nonumber\\	&=& O\left( xe^{-c_3\sqrt{\log x}}\right)\nonumber,
	\end{eqnarray}	
	where $c_1,c_3>0$ are constants.
\end{proof}

%WWWWWWWWWWWWWWWWWWWWWWWWWWWWWWWWWWWWWW
%WWWWWWWWWWWWWWWWWWWWWWWWWWWWWWWWWWWWWW
\begin{lem} \label{lem2059.450} Assume that $d_1\mid 2n+1,\;d_2\mid 2n+1$, and $d_3\mid n$. If $x \geq 1$ is a large number, and $x_1=(\log x)^{c_0}<xe^{c_1\sqrt{\log x}}$, with $c_0>0$, then,
	\begin{eqnarray} \label{eq2059.450}
		T_1(x)&=&\sum_{\substack{x_1<d_1\leq 2x+1\\1\leq d_2\leq 2x+1\\1\leq d_3\leq x}}\mu(d_1)\log (d_1)\mu(d_2)\log (d_2)\mu(d_3)\log (d_3)\left(\sum_{\substack{n \leq x\\ d_1\mid 2n+1,\;d_2\mid 2n+1\\d_3\mid n}}1-\frac{x}{[d_1,d_2,d_3]}\right)\nonumber\\
		&=&O\left( xe^{-c_4\sqrt{\log x}}\right)\nonumber,
	\end{eqnarray}
	where $c,c_1,c_4>0$ are constants.
\end{lem}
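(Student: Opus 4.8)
The plan is to follow the template used for $T_{0}(x)$ in Lemma \ref{lem2059.450B}, with the arithmetic‑progression estimate of Lemma \ref{lemA2002.400W} playing the role that Lemma \ref{lem4040.600} played there. The first step is to record that, for a fixed triple $(d_{1},d_{2},d_{3})$, the inner divisibility conditions collapse to a single congruence. Since $\gcd(2n+1,n)=1$, the standing hypothesis $d_{1}\mid 2n+1$, $d_{2}\mid 2n+1$, $d_{3}\mid n$ forces $d_{1},d_{2}$ odd and $\gcd(d_{1}d_{2},d_{3})=1$, so that $[d_{1},d_{2},d_{3}]=[d_{1},d_{2}]\,d_{3}=:q$ exactly as in Lemma \ref{lem2059.450B}; and then, by the Chinese remainder theorem, the three conditions amount to a single congruence $n\equiv a\bmod q$, where $a=a(d_{1},d_{2},d_{3})$ is the unique residue with $2a+1\equiv 0\bmod[d_{1},d_{2}]$ and $a\equiv 0\bmod d_{3}$. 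Consequently the inner difference occurring in $T_{1}(x)$ is exactly the error term in counting the integers $n\le x$ in the progression $a\bmod q$, and Lemma \ref{lemA2002.400W}, in the form \eqref{eqA2002.450W}, yields
\[
\sum_{\substack{n\le x\\ d_{1}\mid 2n+1,\ d_{2}\mid 2n+1\\ d_{3}\mid n}}1-\frac{x}{[d_{1},d_{2},d_{3}]}=O\!\left(\frac{x}{q}\,e^{-c\sqrt{\log x}}\right)=O\!\left(\frac{x}{[d_{1},d_{2}]\,d_{3}}\,e^{-c\sqrt{\log x}}\right),
\]
uniformly in $(d_{1},d_{2},d_{3})$, with $c>0$ a constant.

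Next I would substitute this estimate into $T_{1}(x)$, move the absolute value inside the sum, and use $|\mu(d_{i})|\le 1$, obtaining
\begin{align*}
|T_{1}(x)| &\ll x\,e^{-c\sqrt{\log x}}\sum_{\substack{x_{1}<d_{1}\le 2x+1\\ 1\le d_{2}\le 2x+1\\ 1\le d_{3}\le x}}\frac{(\log d_{1})(\log d_{2})(\log d_{3})}{[d_{1},d_{2}]\,d_{3}}\\
&\le x\,e^{-c\sqrt{\log x}}\biggl(\sum_{d_{1},d_{2}\le 2x+1}\frac{(\log d_{1})(\log d_{2})}{[d_{1},d_{2}]}\biggr)\biggl(\sum_{d_{3}\le x}\frac{\log d_{3}}{d_{3}}\biggr).
\end{align*}
The double sum over $d_{1},d_{2}$ is $\ll(\log x)^{5}$ by Lemma \ref{lem4040.800L} (the enlarged range $\le 2x+1$ merely replaces $\log x$ by $\log(2x+1)\asymp\log x$), and $\sum_{d_{3}\le x}(\log d_{3})/d_{3}\ll(\log x)^{2}$ by partial summation; hence
\[
|T_{1}(x)|\ll x\,(\log x)^{7}\,e^{-c\sqrt{\log x}}\ll x\,e^{-c_{4}\sqrt{\log x}}
\]
for any fixed constant $0<c_{4}<c$, since $(\log x)^{7}=e^{7\log\log x}=o\!\left(e^{(c-c_{4})\sqrt{\log x}}\right)$. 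Combined with the bound for $T_{0}(x)$ from Lemma \ref{lem2059.450B}, this proves Lemma \ref{lem2059.400}, and thereby Lemma \ref{lem2059.350}.

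The step demanding the most care is the first one: one must verify that the triple of divisibility conditions genuinely describes a \emph{single} residue class modulo $q=[d_{1},d_{2}]\,d_{3}$ --- equivalently, that the coprimality $\gcd(d_{1}d_{2},d_{3})=1$ implied by the standing hypothesis (which, as in Lemma \ref{lem2059.450B}, restricts attention to triples that support a nonempty inner sum) makes $[d_{1},d_{2},d_{3}]=[d_{1},d_{2}]\,d_{3}$ --- so that Lemma \ref{lemA2002.400W} can be applied with the full saving $1/\bigl([d_{1},d_{2}]\,d_{3}\bigr)$ and not merely $1/d_{3}$. It is precisely this full saving, together with the polynomial power of $\log x$ being absorbed into $e^{-c\sqrt{\log x}}$, that makes the triple sum converge after Lemma \ref{lem4040.800L} is invoked. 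A secondary subtlety is that the argument relies on the sharp form \eqref{eqA2002.450W} of the progression count, namely an error $O\bigl((x/q)e^{-c\sqrt{\log x}}\bigr)$ with no additive $O(1)$ remainder; granting this, the remaining estimates are routine and parallel those carried out for $T_{0}(x)$ in Lemmas \ref{lem2059.450B} and \ref{lem2059.400}.
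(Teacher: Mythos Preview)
Your proposal is correct and follows essentially the same route as the paper's own proof: apply Lemma~\ref{lemA2002.400W} to bound the inner difference by $(x/[d_1,d_2,d_3])\,e^{-c\sqrt{\log x}}$, use the standing hypothesis to factor $[d_1,d_2,d_3]=[d_1,d_2]\,d_3$, and then invoke Lemma~\ref{lem4040.800L} together with $\sum_{d_3\le x}(\log d_3)/d_3\ll(\log x)^2$ to reach $x(\log x)^7 e^{-c\sqrt{\log x}}=O(xe^{-c_4\sqrt{\log x}})$. Your write-up is in fact more explicit than the paper's in spelling out the CRT reduction to a single residue class and in flagging the dependence on the sharp form~\eqref{eqA2002.450W} of the progression count.
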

\begin{proof}[\textbf{Proof}] Let $q=[d_1,d_2,d_3]$. Taking absolute value and invoking Lemma \ref{lemA2002.400W} yield
	\begin{eqnarray} \label{eq2059.455}
|T_1(x)|&\leq&\sum_{\substack{x_1<d_1\leq 2x+1\\1\leq d_2\leq 2x+1\\1\leq d_3\leq x}}\log (d_1)\log (d_2)\log (d_3)\left|\sum_{\substack{n \leq x\\ d_1\mid 2n+1,\;d_2\mid 2n+1\\d_3\mid n}}1-\frac{x}{[d_1,d_2,d_3]}\right|\nonumber\\
&\ll&\sum_{\substack{x_1<d_1\leq 2x+1\\1\leq d_2\leq 2x+1\\1\leq d_3\leq x}}\log (d_1)\log (d_2)\log (d_3)\left(\frac{x}{[d_1,d_2,d_3]}e^{-c\sqrt{\log x} } \right) \nonumber\\
&\ll&xe^{-c\sqrt{\log x} }\sum_{\substack{x_1<d_1\leq 2x+1\\1\leq d_2\leq 2x+1\\1\leq d_3\leq x}}\frac{\log (d_1)\log (d_2)\log (d_3)}{[d_1,d_2,d_3]}  \nonumber.
\end{eqnarray}	
	
The hypothesis $d_1\mid 2n+1,\;d_2\mid 2n+1$, and $d_3\mid n$ implies that $[d_1,d_2,d_3]=[d_1,d_2]d_3$ since $\gcd(d_1d_2,d_3)=1$. Thus, the finite sum can be factored as
	\begin{equation} \label{eq2059.460}
		T_1(x) =O\bigg( x e^{-c\sqrt{\log x}} \sum_{\substack{x_1<d_1\leq 2x+1\\1\leq d_2\leq 2x+1}}\frac{\log (d_1)\log (d_2)}{[d_1,d_2]}\sum_{1\leq d_3\leq x}\frac{\log (d_3)}{d_3}\bigg) \nonumber.
	\end{equation}	
	Estimating the inner sum, and applying Lemma \ref{lem4040.800L} to the middle sum return
	\begin{eqnarray} \label{eq2059.465}
		T_1(x) 
		&=&O\left( x e^{-c\sqrt{\log x}} \cdot(\log x)^5\cdot(\log x)^2\right)  \\
		&=& O\left(x(\log x)^7) e^{-c\sqrt{\log x}}\right) \nonumber\\
		&=& O\left( xe^{-c_4\sqrt{\log x}}\right)\nonumber,
	\end{eqnarray}	
	where $c,c_1,c_4>0$ are constants.
\end{proof}

%SSSSSSSSSSSSSSSSSSSSSSSSSSSSSSSSSSSSSSSSSSSSSSSS
%SSSSSSSSSSSSSSSSSSSSSSSSSSSSSSSSSSSSSSSSSSSSSSSS
%SSSSSSSSSSSSSSSSSSSSSSSSSSSSSSSSSSSSSSSSSSSSSSSS
%SSSSSSSSSSSSSSSSSSSSSSSSSSSSSSSSSSSSSSSSSSSSSSSS
\section{\textbf{Fundamental Results}} \label{S2020}The classical weighted Germain primes counting function has the form
\begin{equation}\label{eq2020.900}
	\sum_{1\leq n \leq x} \Lambda(n)\Lambda(2n+1).
\end{equation}
The derivation of a lower bound for the number of Germain primes up to a large number $x\geq1$ is based on a new weighted Germain primes counting function
\begin{equation}\label{eq2020.905}
	\sum_{ n \leq x} w(n)\Lambda(n)\Lambda(2n+1).
\end{equation}
The extra weight factor $w(n)=\Lambda(2n+1)$ provides effective control over the error term at the cost of a smaller main term, by a factor of approximately $\log x$.

\begin{thm}\label{thm2020.700}
	If $x\geq 1$ is a large real number, then
	\begin{equation}
		\sum_{1\leq n\leq x}w(n)\Lambda(n)\Lambda(2n+1) \gg \frac{x\log \log x}{\log \log \log x}\nonumber.
	\end{equation}
\end{thm}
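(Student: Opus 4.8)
The plan is to unfold the extra weight, expand the resulting square of a von Mangoldt factor, and split the two divisor variables at a logarithmic threshold, so that the diagonal block reproduces the main term of Lemma~\ref{lem2099.100} while the remaining ranges are absorbed into the error term of Lemma~\ref{lem2059.350}.

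First, since $w(n)=\Lambda(2n+1)$, the quantity to be bounded is $\sum_{n\le x}\Lambda(n)\Lambda(2n+1)^2$. Writing each copy of $\Lambda(2n+1)$ through the convolution identity $\Lambda(m)=-\sum_{d\mid m}\mu(d)\log d$ and expanding the square gives
\[
\Lambda(2n+1)^2=\sum_{d_1\mid 2n+1}\ \sum_{d_2\mid 2n+1}\mu(d_1)\log d_1\,\mu(d_2)\log d_2 ,
\]
and since $d_i\mid 2n+1$ forces $d_i\le 2x+1$, interchanging the order of summation yields
\[
\sum_{n\le x}\Lambda(n)\Lambda(2n+1)^2
=\sum_{\substack{1\le d_1\le 2x+1\\ 1\le d_2\le 2x+1}}\mu(d_1)\log d_1\,\mu(d_2)\log d_2
\sum_{\substack{n\le x\\ d_1\mid 2n+1,\ d_2\mid 2n+1}}\Lambda(n).
\]

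Next I would fix $x_1=(\log x)^{c_0}$ for a constant $c_0>0$ (any fixed value is admissible) and partition the $(d_1,d_2)$-range into the block $d_1\le x_1$, $d_2\le x_1$ --- the main term $M(x)$ of Lemma~\ref{lem2099.100} --- and its complement --- the error term $E(x)$ of \eqref{eq2059.345}. This is the exact decomposition
\[
\sum_{1\le n\le x}w(n)\Lambda(n)\Lambda(2n+1)=M(x)+E(x).
\]
Lemma~\ref{lem2099.100} supplies $M(x)\gg x\log\log x/\log\log\log x$, and Lemma~\ref{lem2059.350} supplies $E(x)=O\!\left(xe^{-c\sqrt{\log x}}\right)$. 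Since $xe^{-c\sqrt{\log x}}=o\!\left(x\log\log x/\log\log\log x\right)$, adding the two estimates gives the claimed lower bound.

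The substantive work is hidden in the two invoked lemmas, so at the level of Theorem~\ref{thm2020.700} the one genuine point is the choice of the splitting parameter $x_1$: it must be small enough that the moduli $[d_1,d_2]$ --- and $[d_1,d_2,d_3]$, after inserting $\Lambda(n)=-\sum_{d_3\mid n}\mu(d_3)\log d_3$ in the error term and using the factorization $[d_1,d_2,d_3]=[d_1,d_2]d_3$ that follows from $\gcd(2n+1,n)=1$ --- stay below $e^{c\sqrt{\log x}}$, so that the Siegel--Walfisz form of the prime number theorem in arithmetic progressions and Lemma~\ref{lemA2002.400W} are applicable and the polylogarithmic losses from Lemma~\ref{lem4040.800L} are dominated by the exponential saving; yet $x_1$ must be large enough that Lemma~\ref{lem4040.250P} evaluated at $x_1$ still contributes a quantity of order $\log\log x/\log\log\log x$. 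The value $x_1=(\log x)^{c_0}$ meets both demands for every fixed $c_0>0$, after which the proof of Theorem~\ref{thm2020.700} is just the assembly of Lemmas~\ref{lem2099.100} and \ref{lem2059.350} together with the elementary size comparison noted above.
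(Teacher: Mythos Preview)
Your proposal is correct and follows essentially the same route as the paper: expand $w(n)\Lambda(2n+1)=\Lambda(2n+1)^2$ via the convolution identity, swap the order of summation, split at $x_1=(\log x)^{c_0}$, and invoke Lemmas~\ref{lem2099.100} and \ref{lem2059.350}. Your closing paragraph on why $x_1$ must be a power of $\log x$ is an accurate gloss on the constraints that drive the paper's choice, though the paper itself leaves that rationale implicit.
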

\begin{proof}[\textbf{Proof}] Substitute the identity $\Lambda(n)=-\sum_{d\mid n}\mu(d)\log d$, see \cite[Theorem 2.11]{AT1976}, then reverse the order of summations.
	\begin{eqnarray} \label{eq2020.910}
		\psi_{0}(x)&=&\sum_{n \leq x} \Lambda(n)\Lambda^2(2n+1)\\[.2cm]
		&=&\sum_{n \leq x} \Lambda(n)\sum_{d_1\mid 2n+1}\mu(d_1)\log (d_1)\sum_{d_2\mid 2n+1}\mu(d_1)\log (d_2)\nonumber\\[.2cm]
		&=&\sum_{\substack{1\leq d_1\leq 2x+1\\ 1\leq d_2\leq 2x+1}}\mu(d_1) \log (d_1)\mu(d_2)\log (d_2)\sum_{\substack{n \leq x\\ d_1\mid 2n+1,\;d_2\mid 2n+1}}\Lambda(n)\nonumber.
	\end{eqnarray} 
	Let $x_1=(\log x)^{c_0}$, with $c_0>0$ constant, and partition the triple finite sum. 
	\begin{eqnarray} \label{eq2020.920}
		\psi_{0}(x)
		&=&	\sum_{\substack{1\leq d_1\leq x_1\\ 1\leq d_2\leq x_1}}\mu(d_1) \log (d_1)\mu(d_2)\log (d_2)\sum_{\substack{n \leq x\\ d_1\mid 2n+1,\;d_2\mid 2n+1}}\Lambda(n)\\[.2cm]
		&&+\sum_{\substack{1\leq d_1\leq 2x+1\\ 1\leq d_2\leq 2x+1\\d_1>x_1\text{ or }d_2>x_1}}\mu(d_1) \log (d_1)\mu(d_2)\log (d_2)\sum_{\substack{n \leq x\\ d_1\mid 2n+1,\;d_2\mid 2n+1}}\Lambda(n)\nonumber\\[.2cm]
		&=&M(x)+E(x)\nonumber.
	\end{eqnarray} 
	Summing the main term computed in Lemma \ref{lem2099.100} and the error term computed in Lemma \ref{lem2059.350}, yields
	\begin{eqnarray} \label{eq2020.930}
		\psi_0(x)
		&=&M(x)+E(x)\\[.2cm]
		&\gg&\frac{x\log \log x}{\log \log \log x}+O\left( xe^{-c_1\sqrt{\log x}}\right)\nonumber	\\[.2cm]
		&\gg&\frac{x\log \log x}{\log \log \log x},\nonumber
	\end{eqnarray}
	where $c_1>0$ is a constant. 
\end{proof}

\begin{thm}\label{thm2020.950}
	If $x\geq 1$ is a large real number, then
	\begin{equation}
		\sum_{ n\leq x}\Lambda(n)\Lambda(4n+1) \gg \frac{x\log \log x}{(\log x)(\log \log \log x)}\nonumber.
	\end{equation}
\end{thm}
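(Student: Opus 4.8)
The plan is to imitate, for the linear form $4n+1$, the two-step argument that produces Theorem \ref{thm3030.900} from the weighted estimate Theorem \ref{thm2020.700}. Put $w(n)=\Lambda(4n+1)$ and consider the weighted counting function
\begin{equation}
	S(x)=\sum_{n\leq x}w(n)\Lambda(n)\Lambda(4n+1)=\sum_{n\leq x}\Lambda(n)\Lambda^{2}(4n+1).\nonumber
\end{equation}
The first task is to prove $S(x)\gg x\log\log x/\log\log\log x$; the bound in the statement then follows immediately, because $\Lambda(4n+1)\leq\log(4n+1)\leq 2\log x$ for every $n\leq x$ once $x$ is large, so that $\Lambda(n)\Lambda^{2}(4n+1)\leq 2(\log x)\,\Lambda(n)\Lambda(4n+1)$ and therefore
\begin{equation}
	\sum_{n\leq x}\Lambda(n)\Lambda(4n+1)\ \geq\ \frac{S(x)}{2\log x}\ \gg\ \frac{x\log\log x}{(\log x)(\log\log\log x)}.\nonumber
\end{equation}

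To estimate $S(x)$ I would run, with $2n+1$ replaced by $4n+1$ throughout, the argument of Sections \ref{S2020}, \ref{S2099} and \ref{S2059}. Substituting $\Lambda(n)=-\sum_{d\mid n}\mu(d)\log d$ and reversing the order of summation, as in \eqref{eq2020.910}, gives
\begin{equation}
	S(x)=\sum_{\substack{1\leq d_{1}\leq 4x+1\\ 1\leq d_{2}\leq 4x+1}}\mu(d_{1})\log(d_{1})\mu(d_{2})\log(d_{2})\sum_{\substack{n\leq x\\ d_{1}\mid 4n+1,\;d_{2}\mid 4n+1}}\Lambda(n),\nonumber
\end{equation}
and splitting the outer sum at $x_{1}=(\log x)^{c_{0}}$ yields $S(x)=M(x)+E(x)$ as in \eqref{eq2020.920}. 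Two elementary facts about the form $4n+1$ make the rest of the machinery go through without change: for $q=[d_{1},d_{2}]$ the congruence $4n\equiv-1\bmod q$ is solvable exactly when $q$ is odd, and then its solution $a$ satisfies $\gcd(a,q)=1$, so the prime number theorem in arithmetic progressions (\cite[Corollary 11.19]{MV2007}) gives $\sum_{n\leq x,\ n\equiv a\bmod q}\Lambda(n)=x/\varphi(q)+O\!\left(xe^{-c_{1}\sqrt{\log x}}\right)$ just as in \eqref{eq2099.110}; and $\gcd(4n+1,n)=1$, so that whenever $d_{3}\mid n$ one has $\gcd(d_{1}d_{2},d_{3})=1$ and $[d_{1},d_{2},d_{3}]=[d_{1},d_{2}]d_{3}$, which is precisely the factorization exploited in Lemmas \ref{lem2059.450B} and \ref{lem2059.450}.

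With these in hand the main term becomes $M_{0}(x)=x\sum_{d_{1},d_{2}\leq x_{1}}\mu(d_{1})\log(d_{1})\mu(d_{2})\log(d_{2})/\varphi([d_{1},d_{2}])$, which by Lemma \ref{lem4040.250P} is $\gg x\log\log x/\log\log\log x$, while the complementary contribution $M_{1}(x)$ and the full error term $E(x)$ are handled by Lemmas \ref{lem2059.300}, \ref{lem4040.600}, \ref{lem4040.800L} and the large sieve bound of Lemma \ref{lemA2002.400W} exactly as in Lemmas \ref{lem2059.350}--\ref{lem2059.450}, giving $M_{1}(x)+E(x)=O\!\left(xe^{-c_{2}\sqrt{\log x}}\right)$; adding the pieces yields $S(x)\gg x\log\log x/\log\log\log x$. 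The one point that is not a mechanical transcription of the $2n+1$ case, and hence the thing I expect to be the main obstacle, is checking that the singular series attached to the pair $(n,4n+1)$ is still bounded away from $0$, i.e. that Lemma \ref{lem4040.850P} and the positivity \eqref{eq4040.290P}--\eqref{eq4040.295P} remain valid. This reduces to a comparison of local densities: at the prime $2$ the form $4n+1$ is automatically odd, exactly as $2n+1$ is, contributing the same factor; and at an odd prime $\ell$ the two residues $n\equiv0$ and $n\equiv-4^{-1}\bmod \ell$ that must be excluded are distinct (since $\ell\nmid4$), giving the same local factor $1-1/(\ell-1)^{2}$ as for $2n+1$. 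Hence the singular series coincides with the one in \eqref{eq4040.295P}, and the main term keeps the asserted order of magnitude.
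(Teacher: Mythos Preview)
Your proposal is correct and follows essentially the same approach as the paper, which simply states that the proof is the same as Theorem~\ref{thm2020.700} \emph{mutatis mutandis}. You are in fact more explicit than the paper: you verify the arithmetic points (oddness of $[d_1,d_2]$, solvability of $4n\equiv -1$, the coprimality $\gcd(4n+1,n)=1$, and the equality of local factors in the singular series) that the paper leaves implicit, and you replace the partial-summation step of \eqref{eq3535.900} by the equivalent and slightly simpler pointwise inequality $\Lambda(4n+1)\leq 2\log x$.
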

\begin{proof}[\textbf{Proof}] The proof of this result is the same as the previous result in Theorem \ref{thm2020.700} mutuatis mutandis.
\end{proof}

%SSSSSSSSSSSSSSSSSSSSSSSSSSSSSSSSSSSSSSSSSSSSSSSS
%SSSSSSSSSSSSSSSSSSSSSSSSSSSSSSSSSSSSSSSSSSSSSSSS
%SSSSSSSSSSSSSSSSSSSSSSSSSSSSSSSSSSSSSSSSSSSSSSSS
%SSSSSSSSSSSSSSSSSSSSSSSSSSSSSSSSSSSSSSSSSSSSSSSS
\section{\textbf{The Main Results}} \label{S3535}

\begin{proof}[\textbf{Proof}] (Theorem \ref{thm3030.900}) Partial summation, and an application of Theorem \ref{thm2020.700} yield
	\begin{eqnarray}\label{eq3535.900}
		\psi_{\mathcal{G}}(x)&=&	\sum_{ n \leq x} \Lambda(n)\Lambda(2n+1)\\[.2cm]
		&\gg&	\sum_{ n \leq x} \frac{w(n)\Lambda(n)\Lambda(2n+1)}{\log n}\nonumber\\[.2cm]
		&\gg&\int_2^x\frac{1}{\log z}d\psi_0(z)\nonumber\\[.2cm]
		&\gg&\frac{x\log \log x}{(\log x)(\log \log \log x)}.\nonumber
	\end{eqnarray}	
	Quod erat inveniendum.
\end{proof}

%SSSSSSSSSSSSSSSSSSSSSSSSSSSSSSSSSSSSSSSSSSSSSSSSSSSS
%SSSSSSSSSSSSSSSSSSSSSSSSSSSSSSSSSSSSSSSSSSSSSSSSSSSS
%SSSSSSSSSSSSSSSSSSSSSSSSSSSSSSSSSSSSSSSSSSSSSSSSSSSS
\section{\textbf{Sums over the Germain Primes}}  \label{S3288}
A few of the basic standard sums over the subset of Germain primes 
\begin{equation}\label{eq3288.150}
\mathcal{G}=\{p:2p+1 \text{  is prime }\}=\{2,3,5,11,23,\cdots \}
\end{equation} 
are computed in this section.
\begin{lem} \label{lem3208.150}If $x\geq1$ is a large number, then
	\begin{equation}\label{eq3288.150b}
		\sum_{\substack{p\leq x\\p\in \mathcal{G}}}	\frac{1}{p}	=\frac{1}{2}+\frac{1}{3}+\frac{1}{5}+\frac{1}{11}+\frac{1}{23}+\cdots\geq1.167720685111989459
	\end{equation}
converges to a constant.
\end{lem}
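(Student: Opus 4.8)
The plan is to show that the series $\sum_{p \in \mathcal{G}} 1/p$ is dominated by a convergent comparison series, and then to observe that it is a series of positive terms, hence its partial sums form an increasing sequence bounded above, which therefore converges; the stated numerical lower bound is simply the value of the first five explicit terms. The key point is the upper bound on the tail, and for this I would invoke the unconditional upper bound on the counting function of Germain primes already recorded in the excerpt, namely \eqref{eq3030.070} together with \cite[Corollary 1.2]{BS2022}: there is an explicit constant $c>0$ with $\sum_{n\leq x}\Lambda(n)\Lambda(2n+1)\leq cx$. Since for $p\in\mathcal{G}$ the summand $\Lambda(p)\Lambda(2p+1)=(\log p)\log(2p+1)\gg(\log p)^2$, this yields $\pi_{\mathcal{G}}(x)=\#\{p\leq x: p\in\mathcal{G}\}\ll x/(\log x)^2$ by a routine partial summation (or dyadic decomposition) argument.

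First I would make precise the bound $\pi_{\mathcal{G}}(x)\ll x/(\log x)^2$: starting from $\sum_{n\leq x}\Lambda(n)\Lambda(2n+1)\leq cx$ and restricting to prime $n=p$ in the range $\sqrt{x}<p\leq x$, where $(\log p)\log(2p+1)\geq \tfrac14(\log x)^2$, one gets $\#\{p\in(\sqrt x,x]: p\in\mathcal{G}\}\leq 4cx/(\log x)^2$; summing this over the dyadic ranges $(x/2^{k+1},x/2^k]$ for $k\geq0$ gives $\pi_{\mathcal{G}}(x)\ll x/(\log x)^2$ with an absolute implied constant. Next I would apply partial summation: writing $A(t)=\pi_{\mathcal{G}}(t)$,
\[
\sum_{\substack{p\leq x\\ p\in\mathcal{G}}}\frac{1}{p}=\frac{A(x)}{x}+\int_2^x\frac{A(t)}{t^2}\,dt\ll \frac{1}{(\log x)^2}+\int_2^x\frac{dt}{t(\log t)^2},
\]
and the last integral is $\leq \int_2^\infty dt/(t(\log t)^2)<\infty$. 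Hence the partial sums of $\sum_{p\in\mathcal{G}}1/p$ are uniformly bounded. Since every term $1/p$ is positive, the partial sums increase, so by the monotone convergence theorem for sequences they converge to a finite limit $S$. Finally, truncating at $p=23$ gives the explicit lower bound $S\geq \tfrac12+\tfrac13+\tfrac15+\tfrac1{11}+\tfrac1{23}=1.1677\ldots$ as in \eqref{eq3288.150b}.

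The main obstacle is purely the bookkeeping in deriving $\pi_{\mathcal{G}}(x)\ll x/(\log x)^2$ from the weighted bound \eqref{eq3030.070}, i.e. handling the contribution of small primes and of prime powers $2p+1=q^k$ with $k\geq 2$ correctly; none of this is deep, since prime powers contribute $O(\sqrt x\log x)$ to the weighted sum and small primes $p\leq\sqrt x$ contribute $O(\sqrt x)$ to $\pi_{\mathcal{G}}(x)$, both negligible. Everything after that is a standard convergence-by-comparison argument, so I expect no essential difficulty beyond making the constants explicit enough to quote the numerical bound.
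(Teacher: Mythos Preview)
The paper does not actually prove this lemma: after the statement it simply remarks that ``An expanded and more precise calculation is compiled in \cite[Table 1]{WS2021}'' and moves on to the next lemma, with no \texttt{proof} environment at all. Your argument is correct and is exactly the standard route one would take here: the sieve-type upper bound \eqref{eq3030.070} (or the classical Brun/Selberg bound) gives $\pi_{\mathcal{G}}(x)\ll x/(\log x)^2$, partial summation then shows $\sum_{p\in\mathcal{G}}1/p<\infty$, and the displayed decimal $1.167720685111989459$ is precisely $\tfrac12+\tfrac13+\tfrac15+\tfrac1{11}+\tfrac1{23}$, as you note. So you have supplied a complete proof where the paper gives none; there is nothing to compare against.
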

An expanded and more precise calculation is compiled in \cite[Table 1]{WS2021}.

\begin{lem} \label{lem3288.160}If $\mathcal{G}=\{2,3,5,11,\cdots \}$ is the subset of Germain primes, and $x\geq1$ is a large number, then
	\begin{equation}\label{eq3288.160}
		\sum_{\substack{p\leq x\\p\in \mathcal{G}}}	\frac{\log p}{p}	=	a_0\log\log x+\frac{a_0}{\log x}+O\left(\frac{1}{(\log x)^2} \right),
	\end{equation}
where $a_0=2C_2>0$.	
\end{lem}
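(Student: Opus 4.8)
The plan is to deduce this from the conjectural asymptotic for the Germain prime counting function by Abel summation. I would write $\pi_{\mathcal{G}}(t)=\#\{p\le t:p\in\mathcal{G}\}$ and recall from Conjecture \ref{conj3030.900} that
\[
\pi_{\mathcal{G}}(t)=2C_2\int_2^t\frac{du}{\log u\,\log(2u+1)}+O\left(\frac{t}{(\log t)^3}\right),
\]
so in particular $\pi_{\mathcal{G}}(t)=\frac{2C_2\,t}{(\log t)\log(2t+1)}+O\big(t(\log t)^{-3}\big)$ after one integration by parts. The first ingredient I would establish is the elementary integral identity
\[
\int_2^x\frac{dt}{t\log(2t+1)}=\log\log x+\frac{\log 2}{\log x}+\kappa+O\left(\frac{1}{(\log x)^2}\right),
\]
with $\kappa$ an explicit constant, obtained from the partial fraction decomposition $\tfrac1t=\tfrac{2}{2t+1}+\tfrac{1}{t(2t+1)}$, the antiderivative $\int\tfrac{2\,dt}{(2t+1)\log(2t+1)}=\log\log(2t+1)$, the convergence of $\int^{\infty}\tfrac{dt}{t(2t+1)\log(2t+1)}$, and the expansion $\log\log(2x+1)=\log\log x+\tfrac{\log 2}{\log x}+O((\log x)^{-2})$.

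Next I would apply Abel summation to the sum in question:
\[
\sum_{\substack{p\le x\\ p\in\mathcal{G}}}\frac{\log p}{p}
=\int_{2^{-}}^{x}\frac{\log t}{t}\,d\pi_{\mathcal{G}}(t)
=\frac{\log x}{x}\,\pi_{\mathcal{G}}(x)+\int_{2}^{x}\pi_{\mathcal{G}}(t)\,\frac{\log t-1}{t^{2}}\,dt,
\]
and substitute the asymptotic for $\pi_{\mathcal{G}}$. The boundary term equals $\frac{2C_2}{\log(2x+1)}+O((\log x)^{-2})$. In the integral, write $\pi_{\mathcal{G}}(t)=2C_2 I(t)+E(t)$ with $I(t)=\int_2^t\frac{du}{\log u\log(2u+1)}$ and $E(t)=O(t(\log t)^{-3})$; one integration by parts turns the $I$-piece into $-\frac{2C_2}{\log(2x+1)}+2C_2\int_2^x\frac{dt}{t\log(2t+1)}+O((\log x)^{-2})$, cancelling the boundary term up to $O((\log x)^{-2})$, while the $E$-piece contributes a convergent integral, i.e.\ a constant plus $O((\log x)^{-1})$. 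Feeding in the integral identity from the first step then produces $a_0\log\log x$ with $a_0=2C_2$, a secondary term of order $1/\log x$, and an error of the claimed shape, once the $O(1)$ constants are collected (or absorbed by the choice of lower limit in the sum). One could equally run this through $\psi_{\mathcal{G}}(x)=\sum_{n\le x}\Lambda(n)\Lambda(2n+1)$ using \eqref{eq3030.060}: pass from $\psi_{\mathcal{G}}$ to $\vartheta_{\mathcal{G}}(x)=\sum_{p\le x,\,p\in\mathcal{G}}\log p$ by stripping the factor $\log(2p+1)$, and then to $\sum\log p/p$ by a second Abel summation; this makes the emergence of $a_0=2C_2$ most transparent.

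The principal obstacle will be the precision of the lower-order terms. The error $O(t(\log t)^{-3})$ in Conjecture D is only strong enough to isolate the main term $a_0\log\log x$ and to locate the $1/\log x$ term up to an $O((\log x)^{-1})$ ambiguity, not with the stated $O((\log x)^{-2})$; the sharper error term would require a Riemann-hypothesis-strength remainder bound for $\pi_{\mathcal{G}}(t)$. One must also be careful about the distinction between $\log(2p+1)$ and $\log p$: carrying $\log(2t+1)$ honestly through the computation yields the coefficient $2C_2\log 2$ (rather than $a_0$) on the $1/\log x$ term together with a genuine nonzero additive constant, whereas replacing $2p+1$ by $2p$ at the outset collapses the answer to the clean shape $a_0\log\log x+\frac{a_0}{\log x}$; reconciling these $\log 2$ factors and the constant coming from the convergent residual integrals against the displayed formula is the delicate bookkeeping. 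Unconditionally, Theorem \ref{thm3030.900} with the same Abel summation yields only a lower bound of the shape $\sum_{p\le x,\,p\in\mathcal{G}}\log p/p\gg\log\log x/\log\log\log x$, well short of the asymptotic.
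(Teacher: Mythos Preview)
Your approach---Abel summation starting from the conjectural asymptotic for $\pi_{\mathcal{G}}$---is exactly what the paper does; the paper's proof is the two-line sketch ``set $\pi_{\mathcal{G}}(x)=a_0x/(\log x)^2(1+o(1))$ and evaluate $\int_1^x\frac{\log t}{t}\,d\pi_{\mathcal{G}}(t)$'', with no further detail. Your caveats about precision are well taken and apply equally to the paper's argument: from an input of strength $\pi_{\mathcal{G}}(t)=a_0t/(\log t)^2+O(t/(\log t)^3)$ one can extract only $a_0\log\log x+c+O(1/\log x)$ for some constant $c$, not a specific $a_0/\log x$ secondary term with $O((\log x)^{-2})$ remainder, and an additive constant should be present; the paper does not address these points.
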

\begin{proof}[\textbf{Proof}] Set $\pi_{\mathcal{G}}(x)=a_0x/(\log x)^2(1+o(1))$ and evaluate the integral representation:
	\begin{eqnarray}\label{eq3288.160b}
	\sum_{\substack{p\leq x\\p\in \mathcal{G}}}	\frac{\log p}{p}&=&	\int_1^{x}\frac{\log t}{t}d\pi_{\mathcal{G}}(t) \\[.2cm]
	&=&	a_0\log\log x+\frac{a_0}{\log x}+O\left(\frac{1}{(\log x)^2} \right) \nonumber.
\end{eqnarray}	
The constant $a_0=2C_2>0$ is the same as \eqref{eq3030.900}.
\end{proof}

%SSSSSSSSSSSSSSSSSSSSSSSSSSSSSSSSSSSSSSSSSSSSSSSS
%SSSSSSSSSSSSSSSSSSSSSSSSSSSSSSSSSSSSSSSSSSSSSSSS
%SSSSSSSSSSSSSSSSSSSSSSSSSSSSSSSSSSSSSSSSSSSSSSSS
%SSSSSSSSSSSSSSSSSSSSSSSSSSSSSSSSSSSSSSSSSSSSSSSS
\section{\textbf{Application}} \label{S3545}

\begin{thm}\label{thm3545.950} The integer $2$ is a primitive root modulo a Germain prime $4p+1$ infinitely often. Moreover, if $x\geq 1$ is a large real number, the weighted counting function of such primes has the lower bound 
	\begin{equation}
		\sum_{ n\leq x}\Lambda(n)\Lambda(4n+1) \gg \frac{x\log \log x}{(\log x)(\log \log \log x)}\nonumber.
	\end{equation}
\end{thm}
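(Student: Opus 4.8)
The plan is to separate the statement into its quantitative half --- the lower bound for $\sum_{n\le x}\Lambda(n)\Lambda(4n+1)$ --- and its qualitative half --- that $2$ is a primitive root modulo $4p+1$ for infinitely many primes $p$.

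For the quantitative half there is essentially nothing new to do: the displayed inequality is exactly the assertion of Theorem \ref{thm2020.950}, obtained by rerunning the argument of Theorem \ref{thm2020.700} followed by the partial summation of Theorem \ref{thm3030.900}, with the linear form $2n+1$ replaced everywhere by $4n+1$; the singular series of Section \ref{S4040} is altered only by a bounded factor, and the large-sieve input of Section \ref{A2002} is untouched. So I would simply invoke Theorem \ref{thm2020.950}.

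The qualitative half is then a short elementary deduction. Since $\sum_{n\le x}\Lambda(n)\Lambda(4n+1)$ is unbounded while the contribution of the prime powers $n=p^{k}$ with $k\ge 2$, together with the cases $4n+1=\ell^{j}$ with $j\ge 2$, is only $O\!\left(\sqrt{x}\,(\log x)^{2}\right)$, there are infinitely many primes $p$ for which $q:=4p+1$ is prime; each such $p$ satisfies $q\ge 13$, hence $p$ is odd. Fix one such $p$. The group $(\Z/q\Z)^{\times}$ is cyclic of order $q-1=4p$, and since $\gcd(4,p)=1$ the orders of its elements are exactly the divisors $1,2,4,p,2p,4p$; therefore $g$ is a primitive root modulo $q$ if and only if $g^{(q-1)/2}\not\equiv 1$ and $g^{(q-1)/p}\not\equiv 1 \pmod q$.

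It remains to verify these two non-vanishing conditions for $g=2$. For the prime $p$ one has $2^{(q-1)/p}=2^{4}=16$, and $16\equiv 1\pmod q$ is impossible, as it would force $q\mid 15$ whereas $q=4p+1\ge 13$ is prime. For the prime $2$, Euler's criterion gives $2^{(q-1)/2}\equiv\left(\frac{2}{q}\right)\pmod q$, and since $p$ is odd we have $q=4p+1\equiv 5\pmod 8$, so the second supplement to quadratic reciprocity yields $\left(\frac{2}{q}\right)=-1$; thus $2^{(q-1)/2}\equiv -1\not\equiv 1\pmod q$. Both conditions hold, so $\ord_{q}(2)=4p$ and $2$ is a primitive root modulo $q=4p+1$. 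Combined with the infinitude of such $q$ and with Theorem \ref{thm2020.950} for the counting bound, this proves the theorem. Within the present argument there is no serious obstacle --- the only points one must not skip are that $q$ is never the small prime $3$ or $5$ (handled by $q\ge 13$) and that $p$ is genuinely odd, which is precisely what pins $q$ to the residue class $5\pmod 8$ and hence forces $2$ to be a quadratic non-residue, the single fact that makes $2$ a primitive root throughout this family. The genuine difficulty lies upstream, in the proof of Theorem \ref{thm2020.950} itself.
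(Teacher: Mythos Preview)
Your proposal is correct and follows essentially the same route as the paper's proof: invoke Theorem \ref{thm2020.950} for the counting bound and for infinitude, then verify via the two-prime-divisor primitive root test (Lemma \ref{lem3252.550}) that $2^{4}\not\equiv 1\pmod q$ and $2^{(q-1)/2}\equiv(2/q)=-1\pmod q$ using $q\equiv 5\pmod 8$. Your treatment is in fact slightly more careful than the paper's --- you explicitly dispose of the prime-power contributions to the von Mangoldt sum and justify $q\nmid 15$ via $q\ge 13$ prime, whereas the paper simply stipulates $q>16$ --- but the argument is the same.
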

\begin{proof}[\textbf{Proof}] The existence of infinitely many Germain prime pairs $p$ and $4p+1$ follow from Theorem \ref{thm2020.950}. To show that 2 is a primitive root infinite often, fix a prime $q=4p+1>16$. By Lemma \ref{lem3252.550}, the integer $2$ is a primitive root modulo $q$ if and only if
	
\begin{equation}\label{eq3545.950b}
2^{\frac{q-1}{2}}	\equiv \left ( \frac{2}{q}\right )\equiv(-1)^{\frac{q^2-1}{8}}\equiv(-1)^{\frac{16p^2+8p}{8}}\equiv-1\not \equiv 1 \mod  q,
\end{equation}	
see Lemma \ref{lem3242.850}, and
\smallskip\noindent
\begin{equation}\label{eq3545.950c}
	2^{\frac{q-1}{p}}	\equiv  2^4\not \equiv 1 \mod  q.
\end{equation}		
This completes the verification.
\end{proof}

A small table of prime pairs $p$ and $q=4p+1$ is recorded below. The integer $2$ is automatically a primitive root modulo $q$.
\begin{table}[ht]
	\setlength{\tabcolsep}{0.5cm}
	\renewcommand{\arraystretch}{1.250092}
	\setlength{\arrayrulewidth}{.6pt}
	\centering
\begin{tabular}{c|c|c|c|c|c|c|c}
$p$	&  $4p+1$&  $p$& $4p+1$&$p$& $4p+1$&$p$& $4p+1$\\
	\hline
3	&  13& 79 & 317&277&1109&577& 2309\\
	\hline
7	&  29& 97 &  389&307& 1229&619&2477\\
	\hline
13	& 53 & 127& 509&373& 1493&673&2697\\
	\hline
37	& 149 &  139& 557&409&1637 &709&2837\\
	\hline
43	&173  & 163 &  653&433&1733&739&2959\\
	\hline
67	& 269 & 193 &773 &487&1949 &853&3413\\
	\hline
73	& 293 & 199 & 797&499&1997&883&3533 \\

\end{tabular}
	\vskip .1 in	
\caption{A list of small prime pairs $p$ and $4p+1$ }
\label{table2}
\end{table}
%SSSSSSSSSSSSSSSSSSSSSSSSSSSSSSSSSSSSSSSSSSSSSSSS
%SSSSSSSSSSSSSSSSSSSSSSSSSSSSSSSSSSSSSSSSSSSSSSSS
%SSSSSSSSSSSSSSSSSSSSSSSSSSSSSSSSSSSSSSSSSSSSSSSS
%SSSSSSSSSSSSSSSSSSSSSSSSSSSSSSSSSSSSSSSSSSSSSSSS
\section{\textbf{Appendix}} \label{A3242}

%SSSSSSSSSSSSSSSSSSSSSSSSSSSSSSSSSSSSSSSSSSSSSSSSSSSS
%SSSSSSSSSSSSSSSSSSSSSSSSSSSSSSSSSSSSSSSSSSSSSSSSSSSS
%SSSSSSSSSSSSSSSSSSSSSSSSSSSSSSSSSSSSSSSSSSSSSSSSSSSS
\subsection{Primitive Roots Tests}  \label{S3242.100}
For any prime $p \geq 3$, the multiplicative group $G$ of the prime finite fields $\mathbb{F}_p$ is a cyclic group of cardinality $p-1=\#G$. Similar result is true for any finite extension $\mathbb{F}_q$ of $\mathbb{F}_p$, where $q=p^k$ is a prime power. \\

\begin{dfn} \label{dfn3242.100} {\normalfont The \textit{multiplicative order} of an element $u\ne0$ in the cyclic group $\mathbb{F}_p^\times$ is defined by $\ord_p(u)=\min \{k \in \mathbb{N}: u^k \equiv 1 \bmod p \}$. An element is a \textit{primitive root} if and only if $\ord_p(u)=p-1$. 
	}
\end{dfn}

The Euler totient function counts the number of relatively prime integers \(\varphi (n)=\#\{ k:\gcd (k,n)=1 \}\). This counting function is compactly expressed by
the analytic formula \(\varphi (n)=n\prod_{p \mid n}(1-1/p),n\in \mathbb{N} .\)\\

\begin{lem} {\normalfont (Fermat-Euler)} \label{lem3242.200}If \(a\in \mathbb{Z}\) is an integer such that \(\gcd (a,n)=1,\) then \(a^{\varphi (n)}\equiv
	1 \bmod n\).
\end{lem}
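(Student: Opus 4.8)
The plan is to exploit the fact that multiplication by $a$ permutes the reduced residue system modulo $n$, and then to pass from this permutation to the desired congruence by taking a product. First I would fix a complete set of representatives $r_1, r_2, \ldots, r_{\varphi(n)}$ for the residue classes modulo $n$ that are coprime to $n$; by the analytic formula for $\varphi$ recalled just above the statement, there are exactly $\varphi(n)$ of them. The central claim is that the assignment $r \mapsto ar \bmod n$ is a bijection of this set onto itself. Surjectivity (equivalently injectivity on a finite set) is the only substantive point: I would verify that each product $ar_i$ is again coprime to $n$, since $\gcd(a,n)=1$ and $\gcd(r_i,n)=1$ force $\gcd(ar_i,n)=1$, and that the map is injective because $ar_i \equiv ar_j \pmod n$ together with $\gcd(a,n)=1$ yields $r_i \equiv r_j \pmod n$ by the cancellation law.

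Granting the bijection, the two sets $\{r_1,\ldots,r_{\varphi(n)}\}$ and $\{ar_1,\ldots,ar_{\varphi(n)}\}$ coincide modulo $n$ up to reordering, so their products agree modulo $n$:
\begin{equation}
\prod_{i=1}^{\varphi(n)} (a r_i) \equiv \prod_{i=1}^{\varphi(n)} r_i \pmod n. \nonumber
\end{equation}
Factoring $a$ out of the left-hand side gives $a^{\varphi(n)} R \equiv R \pmod n$, where $R = \prod_{i} r_i$. Since each $r_i$ is coprime to $n$, the product $R$ is itself coprime to $n$, hence cancellable modulo $n$, and cancelling $R$ leaves $a^{\varphi(n)} \equiv 1 \pmod n$, as claimed.

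The step I expect to require the most care is the invertibility input: the cancellation of $a$ (in the injectivity argument) and of $R$ (at the end) both rest on the fact that an integer coprime to $n$ is a unit modulo $n$, which is a consequence of B\'ezout's identity. I would state this explicitly once and invoke it in both places. I would also remark that the argument is exactly the elementary incarnation of the group-theoretic principle underlying the appendix: the reduced residues form the multiplicative group $(\Z/n\Z)^\times$ of order $\varphi(n)$, and the conclusion is then immediate from Lagrange's theorem, since $\ord_n(a)$ divides $\varphi(n)$. The permutation-of-residues proof given above has the advantage of being self-contained and of not presupposing Lagrange's theorem, which fits the elementary level of this appendix; the specialization $n=p$ recovers Fermat's little theorem and Lemma \ref{lem3242.200} in the cyclic setting of $\mathbb{F}_p^\times$ used throughout Section \ref{S3242.100}.
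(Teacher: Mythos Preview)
Your argument is correct: the permutation-of-reduced-residues proof is the standard elementary derivation of the Fermat--Euler congruence, and every step you outline (coprimality of $ar_i$ with $n$, injectivity via cancellation of $a$, matching of products, final cancellation of $R$) is sound. The remark that this is the concrete form of Lagrange's theorem for $(\Z/n\Z)^\times$ is also accurate and appropriate.

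As for comparison with the paper: there is nothing to compare. The paper states Lemma~\ref{lem3242.200} without proof, treating it as a classical fact quoted for later use in the primitive-root tests. Your write-up therefore supplies strictly more than the paper does. If you wish to match the paper's level of detail, you could simply state the result and cite a standard reference; if you prefer a self-contained appendix, your proof is exactly the right one to include.
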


\begin{lem} \label{lem3242.500}  {\normalfont (Primitive root test in $\mathbb{F}_p$)} An integer $u \in \Z$ is a primitive root modulo an integer $n \in \N$ if and only if 
	\begin{equation}\label{eq3242.500}
		u^{\varphi(n)/p} -1\not \equiv 0 \mod  n
	\end{equation}
	for all prime divisors $p \mid \varphi(n)$.
\end{lem}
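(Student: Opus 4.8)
The plan is to recast the criterion entirely in terms of the multiplicative order $\ord_n(u)$ and to exploit the fact that, by Definition \ref{dfn3242.100}, the order is precisely the least positive exponent annihilating $u$. Since $u$ is a primitive root modulo $n$ exactly when $\ord_n(u)=\varphi(n)$, it suffices to prove that the non-vanishing conditions \eqref{eq3242.500} hold for every prime divisor $p\mid\varphi(n)$ if and only if $\ord_n(u)=\varphi(n)$. Throughout I assume $\gcd(u,n)=1$, so that Lemma \ref{lem3242.200} (Fermat--Euler) applies and guarantees $u^{\varphi(n)}\equiv 1 \bmod n$; consequently $\ord_n(u)$ is well defined and is a divisor of $\varphi(n)$.

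For the forward implication, suppose $u$ is a primitive root, so $\ord_n(u)=\varphi(n)$. For each prime $p\mid\varphi(n)$ the exponent $\varphi(n)/p$ is a positive integer strictly smaller than $\varphi(n)$. Because the order is the minimal positive exponent with $u^{\ord_n(u)}\equiv 1$, no strictly smaller positive power can be congruent to $1$, and therefore $u^{\varphi(n)/p}\not\equiv 1 \bmod n$. This establishes \eqref{eq3242.500} for all such $p$.

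For the converse I argue by contraposition. Assume $u$ is not a primitive root, so that $d=\ord_n(u)$ is a proper divisor of $\varphi(n)$; in particular the cofactor $\varphi(n)/d$ exceeds $1$ and hence possesses a prime divisor $p$. From $p\mid\varphi(n)/d$ I write $\varphi(n)/d=pk$ for some integer $k\ge 1$, whence $\varphi(n)/p=dk$ and in particular $d\mid\varphi(n)/p$. Using $u^{d}\equiv 1 \bmod n$ then gives
\begin{equation}
u^{\varphi(n)/p}=\bigl(u^{d}\bigr)^{k}\equiv 1 \bmod n,
\end{equation}
which violates \eqref{eq3242.500} for this prime $p$. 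Thus, if the test condition holds for every prime divisor of $\varphi(n)$, no such proper divisor $d$ can occur, forcing $\ord_n(u)=\varphi(n)$, i.e.\ $u$ is a primitive root.

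The argument presents no serious obstacle; the only delicate point is the divisor-chasing step in the converse, where I must exhibit a prime $p$ with $d\mid\varphi(n)/p$. Its existence is secured simply by choosing any prime factor of the integer $\varphi(n)/d>1$, and this is exactly where the minimality of the order, together with the divisibility $d\mid\varphi(n)$ supplied by Lemma \ref{lem3242.200}, is used. No appeal to the cyclic structure of $\mathbb{F}_p^{\times}$ beyond the coprimality hypothesis is needed, so the proof transfers verbatim to any modulus $n$ admitting primitive roots.
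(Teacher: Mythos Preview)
Your proof is correct and is the standard textbook argument for this criterion. The paper itself does not supply a proof of this lemma; it merely states the result and remarks that it is a special case of the Lucas primality test, citing \cite{LE1878} and \cite[Theorem 4.1.1]{CP2005}. Your explicit order-based argument, with the contrapositive step selecting a prime divisor of $\varphi(n)/d$, is exactly what one finds in those references, so there is no substantive difference in approach to discuss. Your observation that the coprimality hypothesis $\gcd(u,n)=1$ is implicitly required (so that Lemma~\ref{lem3242.200} applies and $\ord_n(u)\mid\varphi(n)$) is a worthwhile clarification that the paper's statement omits.
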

The primitive root test is a special case of the Lucas primality test, introduced in \cite[p.\ 302]{ LE1878}. A more recent version appears in \cite[Theorem 4.1.1]{CP2005}, and similar sources. \\

\begin{lem} \label{lem3242.850} The integer $2$ is a quadratic residue, {\normalfont (}quadratic nonresidue{\normalfont )} of the primes of the form $p=8k\pm1$, {\normalfont (}$p=8k\pm3$ respectively{\normalfont )}. Equivalently,
	\begin{equation}\label{eq3242.850}
		\left ( \frac{2}{p}\right )=(-1)^{\frac{p^2-1}{8}}.
	\end{equation}
\end{lem}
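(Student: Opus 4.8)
The plan is to establish the identity $\left(\frac{2}{p}\right)=(-1)^{(p^2-1)/8}$ by evaluating a quadratic Gauss sum attached to an eighth root of unity; the phrasing in terms of $p=8k\pm1$ versus $p=8k\pm3$ then follows at once, since a short check shows $(-1)^{(p^2-1)/8}=+1$ precisely when $p\equiv\pm1\pmod{8}$ and $=-1$ precisely when $p\equiv\pm3\pmod{8}$. Fix an odd prime $p$. Because $p$ is odd we have $8\mid p^2-1$, so the cyclic group $\F_{p^2}^\times$ contains a primitive eighth root of unity $\zeta$; put $y=\zeta+\zeta^{-1}$. The first step is the purely algebraic identity $y^2=\zeta^2+2+\zeta^{-2}=2$, which uses that $\zeta^2$ is a primitive fourth root of unity, so $\zeta^2+\zeta^{-2}=0$. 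In particular $y\neq 0$ in $\F_{p^2}$, since $2\neq 0$ in characteristic $p$.

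Next I would compute $y^p$ in two ways. By Euler's criterion, $2^{(p-1)/2}=\left(\frac{2}{p}\right)$ in $\F_p$, so $y^p=y\cdot(y^2)^{(p-1)/2}=\left(\frac{2}{p}\right)y$. On the other hand, in characteristic $p$ the Frobenius map $t\mapsto t^p$ is a ring homomorphism, whence $y^p=(\zeta+\zeta^{-1})^p=\zeta^p+\zeta^{-p}$. Equating the two expressions gives the relation $\left(\frac{2}{p}\right)y=\zeta^p+\zeta^{-p}$.

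The final step is the case analysis on $p\bmod 8$. When $p\equiv\pm1\pmod{8}$ we get $\zeta^p+\zeta^{-p}=\zeta+\zeta^{-1}=y$, so cancelling the nonzero $y$ yields $\left(\frac{2}{p}\right)=1$. When $p\equiv\pm3\pmod{8}$, using $\zeta^4=-1$ one has $\zeta^{3}=-\zeta^{-1}$ and $\zeta^{-3}=-\zeta$, hence $\zeta^p+\zeta^{-p}=-(\zeta+\zeta^{-1})=-y$, so $\left(\frac{2}{p}\right)=-1$. Matching these four values against those of $(-1)^{(p^2-1)/8}$ on the classes $p\equiv1,3,5,7\pmod{8}$ gives the asserted equality and completes the proof.

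No genuine obstacle is expected: this is the classical second supplement to the law of quadratic reciprocity. The only points that need care are (i) carrying out the computation inside an honest field of characteristic $p$, so that the identity $(\zeta+\zeta^{-1})^p=\zeta^p+\zeta^{-p}$ is literally valid, and (ii) recording that $y\neq0$ before cancelling it. A fully elementary alternative is Gauss's lemma applied to the set $\{2,4,\dots,p-1\}$: the number of its members that exceed $p/2$ equals $\tfrac{p-1}{2}-\lfloor p/4\rfloor$, and a residue-by-residue check modulo $8$ shows this exponent has the same parity as $(p^2-1)/8$, giving the same conclusion.
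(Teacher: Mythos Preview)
Your proof is correct and complete: the Gauss-sum argument with $y=\zeta+\zeta^{-1}\in\F_{p^2}$ satisfying $y^2=2$, followed by the comparison of $y^p$ via Euler's criterion and via Frobenius, is a standard and clean route to the second supplement, and your case analysis modulo $8$ is accurate.

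As a comparison, the paper does not actually supply a proof of this lemma at all; it simply cites \cite[Theorem 1.5]{RH1994} and similar references. So your approach is genuinely different in that you give a self-contained argument where the paper defers to the literature. What your approach buys is independence from external sources and a proof that stays entirely within the algebraic framework already present in the paper (finite fields, Euler's criterion); what the paper's approach buys is brevity, since the result is classical and well known. Your alternative via Gauss's lemma is also valid and would match the more combinatorial style found in many textbook treatments, including the cited reference.
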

\begin{proof}A detailed proof of the quadratic reciprocity laws appears in \cite[Theorem 1.5.]{RH1994}, and similar references.
\end{proof}
\begin{lem} \label{eq3242.860}  {\normalfont (Quadratic reciprocity law)} If $p$ and $q$ are odd primes, then
	\begin{equation}\label{eq3242.870}
		\left ( \frac{p}{q}\right )\left ( \frac{q}{p}\right )=(-1)^{\frac{p-1}{2}\frac{q-1}{2}}.
	\end{equation}
\end{lem}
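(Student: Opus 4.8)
The plan is to prove this by Eisenstein's lattice-point argument, which rests only on Gauss's lemma together with an elementary count of lattice points in a rectangle, and so matches the elementary flavour of the appendix. Fix throughout distinct odd primes $p$ and $q$.

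The first step is Gauss's lemma. For an odd prime $\ell$ and an integer $a$ coprime to $\ell$, reduce each product $ka$ with $1\le k\le (\ell-1)/2$ modulo $\ell$ to the form $ka\equiv \varepsilon_k b_k\pmod\ell$ with $b_k\in\{1,\dots,(\ell-1)/2\}$ and $\varepsilon_k\in\{\pm1\}$, and put $\mu(a,\ell)=\#\{k:\varepsilon_k=-1\}$. One checks the $b_k$ form a permutation of $\{1,\dots,(\ell-1)/2\}$ (otherwise two of the $\pm ka$ would coincide mod $\ell$, impossible for $a$ coprime to $\ell$), so multiplying the $(\ell-1)/2$ congruences and cancelling the invertible factor $1\cdot2\cdots\frac{\ell-1}{2}$ gives $a^{(\ell-1)/2}\equiv(-1)^{\mu(a,\ell)}\pmod\ell$; Euler's criterion $a^{(\ell-1)/2}\equiv\left(\frac{a}{\ell}\right)\pmod\ell$ then yields $\left(\frac{a}{\ell}\right)=(-1)^{\mu(a,\ell)}$.

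The second step specialises to odd $a$ and rewrites $\mu(a,\ell)$ through floor functions. Writing $ka=\ell\lfloor ka/\ell\rfloor+r_k$ with $0<r_k<\ell$, one has $b_k=r_k$ when $r_k<\ell/2$ and $b_k=\ell-r_k$ when $r_k>\ell/2$; summing over $k$ and using that the $b_k$ permute $\{1,\dots,(\ell-1)/2\}$ produces an identity of the shape
\begin{equation}
	(a-1)\sum_{k=1}^{(\ell-1)/2}k=\ell\sum_{k=1}^{(\ell-1)/2}\Big\lfloor\frac{ka}{\ell}\Big\rfloor+\ell\,\mu(a,\ell)-2\!\!\sum_{k:\,r_k>\ell/2}\!\!b_k,\nonumber
\end{equation}
and reducing it modulo $2$ with $a\equiv\ell\equiv1$ collapses it to $\mu(a,\ell)\equiv\sum_{k=1}^{(\ell-1)/2}\lfloor ka/\ell\rfloor\pmod2$. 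Applying this with $(\ell,a)=(p,q)$ and with $(\ell,a)=(q,p)$ gives $\left(\frac{q}{p}\right)=(-1)^{S}$ and $\left(\frac{p}{q}\right)=(-1)^{T}$, where $S=\sum_{k=1}^{(p-1)/2}\lfloor kq/p\rfloor$ and $T=\sum_{j=1}^{(q-1)/2}\lfloor jp/q\rfloor$.

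The last step is the geometric count $S+T=\frac{p-1}{2}\cdot\frac{q-1}{2}$. The lattice points $(k,j)$ with $1\le k\le (p-1)/2$ and $1\le j\le (q-1)/2$ number $\frac{p-1}{2}\cdot\frac{q-1}{2}$, and none lies on the line $pj=qk$, since $pj=qk$ with $1\le k<p$ and $\gcd(p,q)=1$ would force $p\mid k$. For each fixed $k$ the points strictly below the line are precisely those with $1\le j\le\lfloor kq/p\rfloor$ — and $kq/p<q/2$ forces $\lfloor kq/p\rfloor\le (q-1)/2$, so the bound $j\le(q-1)/2$ is automatic — whence the points below the line total $S$; by the symmetric argument the points above total $T$; so $S+T=\frac{p-1}{2}\cdot\frac{q-1}{2}$ and
\begin{equation}
	\left(\frac{p}{q}\right)\left(\frac{q}{p}\right)=(-1)^{S+T}=(-1)^{\frac{p-1}{2}\frac{q-1}{2}},\nonumber
\end{equation}
which is the assertion. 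I expect the one genuinely delicate point to be the parity bookkeeping in the second step — correctly tracking the contribution of the residues $r_k>\ell/2$ to $\sum r_k$ modulo $2$; everything else is routine. If one preferred to sidestep Gauss's lemma, the Gauss-sum proof over $\mathbb{Z}[\zeta_\ell]$ is available, but it imports more algebraic machinery than the rest of the appendix uses.
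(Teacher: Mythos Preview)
Your argument is the classical Eisenstein lattice-point proof and it is correct: Gauss's lemma is stated and justified properly, the parity reduction $(a-1)\sum k\equiv \sum\lfloor ka/\ell\rfloor+\mu\pmod 2$ is derived correctly for odd $a$ and odd $\ell$, and the bijection between lattice points in the $\tfrac{p-1}{2}\times\tfrac{q-1}{2}$ box and the two floor-sums $S,T$ is handled cleanly (including the check that no lattice point sits on the diagonal and that $\lfloor kq/p\rfloor\le(q-1)/2$). The paper, by contrast, does not give a proof at all: it simply cites Rose, \textit{A Course in Number Theory}, Theorem~2.1, and similar references. So your write-up supplies strictly more than the paper does; if anything, it exceeds what the appendix needs, since quadratic reciprocity is only invoked implicitly (the application in Section~\ref{S3545} actually uses only the supplementary law for $\left(\tfrac{2}{p}\right)$, Lemma~\ref{lem3242.850}). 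Your self-assessment that the only delicate point is the mod-$2$ bookkeeping in step two is accurate, and that step is carried out correctly here.
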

\begin{proof}A detailed proof of the quadratic reciprocity laws appears in \cite[Theorem 2.1.]{RH1994}, and similar references.
\end{proof}

%SSSSSSSSSSSSSSSSSSSSSSSSSSSSSSSSSSSSSSSSSSSSSSSSSSSS
%SSSSSSSSSSSSSSSSSSSSSSSSSSSSSSSSSSSSSSSSSSSSSSSSSSSS
%SSSSSSSSSSSSSSSSSSSSSSSSSSSSSSSSSSSSSSSSSSSSSSSSSSSS
\subsection{Very Short Primitive Roots Tests}  \label{S3252.200}
The set of Fermat primes 
\begin{equation}\label{lem3252.150}
	\mathcal{F}=\left \{F_n=2^{2^n}+1:n\geq0\right \}= \left \{3,5,17,257,65537,\ldots\right \}
\end{equation}
has the simplest primitive root test: A quadratic nonresidue $q\geq3$ is a primitive root mod $F_n$. This follows from Lemma \ref{lem3242.500}. The next set of primes with a short primitive root test seems to be the set of generalized Germain primes.
\begin{dfn} \label{dfn3252.500} {\normalfont Let $s\geq 1$ be a parameter. The set of generalized Germain primes is defined by 
		\begin{equation} \label{eq3252.340}
			\mathcal{G}_s=\{p=2^s\cdot r+1: p \text{ and } r \text{ are primes}\}=\{3, 5, 7,13,17,23,29,37,43,41,73,\ldots\}\nonumber.
		\end{equation}
	}
\end{dfn}

\begin{lem} \label{lem3252.550} An integer $q\ne\pm1, n^2$ is a primitive root modulo a Germain prime $p=2^s\cdot r+1$ if and only if 
	\begin{multicols}{2}
		\begin{enumerate}[font=\normalfont, label=(\roman*)]
			\item $\displaystyle  q^{2^{s-1}r}\not \equiv 1 \mod  p$,      
			\item $\displaystyle q^{2^{s}}\not \equiv 1 \mod  p.$
		\end{enumerate}
	\end{multicols}
\end{lem}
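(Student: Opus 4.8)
The plan is to reduce the primitive root condition, via the criterion in Lemma \ref{lem3242.500}, to checking non-triviality of $q$ raised to the two ``maximal proper'' divisor powers of $p-1 = 2^s r$. Since $p = 2^s r + 1$ with $r$ an odd prime, the factorization of the group order $\varphi(p) = p-1 = 2^s r$ has exactly two distinct prime divisors, namely $2$ and $r$ (noting $r$ is odd, so $r \neq 2$; the degenerate cases $r = 2$, i.e. $p \in \{5\}$ for $s=1$, or small $p$, are excluded by the hypothesis $q \neq \pm 1, n^2$ together with $p$ large enough that the listed congruences make sense). Lemma \ref{lem3242.500} then says $q$ is a primitive root modulo $p$ if and only if
\begin{equation}\label{eq3252.550-plan}
	q^{(p-1)/2} \not\equiv 1 \bmod p \qquad \text{and} \qquad q^{(p-1)/r} \not\equiv 1 \bmod p.
\end{equation}
Substituting $p - 1 = 2^s r$ gives $(p-1)/2 = 2^{s-1} r$ and $(p-1)/r = 2^s$, which are precisely conditions (i) and (ii).

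First I would invoke Lemma \ref{lem3242.200} (Fermat--Euler) to record that $q^{p-1} \equiv 1 \bmod p$ whenever $\gcd(q,p) = 1$, so that $\ord_p(q) \mid p - 1 = 2^s r$; the hypotheses on $q$ (not $\pm 1$, not a perfect square, and implicitly $p \nmid q$ since $p$ is large relative to the structural constraints) guarantee $\gcd(q,p) = 1$. Next I would apply the primitive root test of Lemma \ref{lem3242.500} directly with $n = p$ and $\varphi(p) = p - 1 = 2^s r$; the prime divisors of $\varphi(p)$ are exactly $2$ and $r$, so the test requires exactly the two non-congruences in \eqref{eq3252.550-plan}. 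Finally I would rewrite the exponents $(p-1)/2 = 2^{s-1} r$ and $(p-1)/r = 2^s$ to match the stated forms (i) and (ii), and conversely observe that if both (i) and (ii) hold then no proper divisor of $2^s r$ can be the order of $q$ — since any proper divisor divides either $2^{s-1} r$ or $2^s$ — hence $\ord_p(q) = p - 1$.

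The only real subtlety, and the place I would be most careful, is verifying that $2$ and $r$ are genuinely the \emph{only} prime divisors of $p - 1$: this is immediate from $p - 1 = 2^s r$ with $r$ prime, but one must confirm $r \neq 2$, which holds because $r$ is an odd prime in the definition of $\mathcal{G}_s$ (for $s \geq 1$ with $r = 2$ one would get $p = 2^{s+1} + 1$, a Fermat-type prime, handled separately and excluded here). A secondary point is the logical direction ``both conditions $\Rightarrow$ primitive root'': this needs the elementary observation that the maximal proper divisors of $2^s r$ are exactly $2^{s-1} r$ and $2^s$, so that $q^{(p-1)/2} \not\equiv 1$ rules out $\ord_p(q) \mid 2^{s-1} r$ and $q^{(p-1)/r} \not\equiv 1$ rules out $\ord_p(q) \mid 2^s$, leaving only $\ord_p(q) = 2^s r = p - 1$. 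This is routine but is the genuine content of the equivalence, so I would state it explicitly rather than defer to Lemma \ref{lem3242.500} alone.
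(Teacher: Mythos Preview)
Your proposal is correct and follows essentially the same route as the paper: observe that $\varphi(p)=p-1=2^{s}r$ has exactly the two prime divisors $2$ and $r$, apply the primitive root test of Lemma~\ref{lem3242.500}, and rewrite $(p-1)/2=2^{s-1}r$ and $(p-1)/r=2^{s}$. The paper's own proof is terser and omits the maximal-proper-divisor discussion and the $r\neq 2$ caveat you raise, but the argument is the same.
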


\begin{proof} Let $p=2^sr+1$ be a Germain prime, where $r\geq 2$ is prime, and $s\geq1$ is an integer. Since the totient $p-1=2^sr$ has two prime divisors, an integer $q\ne\pm1, n^2$ is a primitive root modulo $p$ if and only if 
	\begin{enumerate}[font=\normalfont, label=(\roman*)]
		\item $\displaystyle  q^{(p-1)/2}=q^{2^{s-1}r}\not \equiv 1 \mod  p$,
		\item $\displaystyle q^{(p-1)/r} =q^{2^{s}}\not \equiv 1 \mod  p,$
	\end{enumerate}
	see Lemma \ref{lem3242.500}.
\end{proof}

As the Germain primes, the set of primes of the form
\begin{equation}
	\mathcal{A}=\{p=k\cdot 2^n+1:n\geq0\},    
\end{equation}
with $k\geq3$ a fixed prime, have a very short primitive root test, similar to the algorithm in Lemma \ref{lem3252.550}. Some of these primes are factors of Fermat numbers. There are many interesting problems associated with these primes, a large literature, and numerical data, see \cite{KW1983}, et cetera.

%    Bibliographies can be prepared with BibTeX using amsplain,
%    amsalpha, or (for "historical" overviews) natbib style.
%\bibliographystyle{amsplain}
%    Insert the bibliography data here.
%BBBBBBBBBBBBBBBBBBBBBBBBBBBBBBBBBBBBBBBBBBBBBBBB
%BBBBBBBBBBBBBBBBBBBBBBBBBBBBBBBBBBBBBBBBBBBBBBBB
%BBBBBBBBBBBBBBBBBBBBBBBBBBBBBBBBBBBBBBBBBBBBBBBB
%bbbbbbbbbbbbbbbbbbbbbbbbbbbbbbbbbbbbbbbbbbbbbbbb
%\newpage

\end{document}